\numberwithin{equation}{section}							
\def\csname ver@etex.sty\endcsname{3000/12/31}	
\let\originalleft\left
\let\originalright\right
\renewcommand{\left}{\mathopen{}\mathclose\bgroup\originalleft}
\renewcommand{\right}{\aftergroup\egroup\originalright}
\renewcommand*{\eqref}[1]{\hyperref[{#1}]{\textup{\tagform@{\ref*{#1}}}}}
\newcommand{\eqdef}{\mathrel{\vcenter{\baselineskip0.5ex\lineskiplimit0pt\hbox{.}\hbox{.}}}=}
\newtheorem{theorem}{Theorem}[section]
\newtheorem*{acknowledgment}{Acknowledgment}
\newtheorem{corollary}[theorem]{Corollary}
\newtheorem{lemma}[theorem]{Lemma}
\newtheorem{remark}[theorem]{Remark}
\crefname{theorem}{Theorem}{Theorems}						
\crefname{Mtheorem}{Main Theorem}{Main Theorems}			
\crefname{lemma}{Lemma}{Lemmata}							
\crefname{corollary}{Corollary}{Corollaries}				
\crefname{proposition}{Proposition}{Propositions}			
\crefname{ineq}{inequality}{inequalities}					
\crefname{cond}{condition}{conditions}						
\crefname{hypoth}{Hypothesis}{Hypotheses}					
\crefname{definition}{Definition}{Definitions}				
\crefname{appsec}{Appendix}{Appendices}
\def\rl{\mathbb{R}}
\def\S{\mathbb{S}}
\def\cC{\mathcal{C}}
\def\cF{\mathcal{F}}
\def\cH{\mathcal{H}}
\def\cI{\mathcal{I}}
\def\dA{\: \mathrm{dA}}
\def\Ar{\mathrm{Area}}
\def\cpt{\mathrm{cpt}}
\def\del{\partial}
\def\KS{\mathrm{KS}}
\def\loc{\mathrm{loc}}
\def\rd{\operatorname{d\!}{}}
\title{Stationary solutions to the Keller--Segel equation on curved planes}
\date{\today}
\keywords{chemotaxis, Keller--Segel equations, Kazdan--Warner equation, logarithmic Hardy--Littlewood--Sobolev inequality}
\subjclass[2020]{35J15, 35Q92, 92C17}
\author{\'Akos Nagy}
\address[\'Akos Nagy]{University of California, Santa Barbara}
\urladdr{\href{https://akosnagy.com}{akosnagy.com}}
\email{\href{mailto:contact@akosnagy.com}{contact@akosnagy.com}}
\begin{document}

\begin{abstract}
	We study stationary solutions to the Keller--Segel equation on curved planes.

	We prove the necessity of the mass being $8 \pi$ and a sharp decay bound. Notably, our results do not require the solutions to have a finite second moment, and thus are novel already in the flat case.

	Furthermore, we provide a correspondence between stationary solutions to the static Keller--Segel equation on curved planes and positively curved Riemannian metrics on the sphere. We use this duality to show the nonexistence of solutions in certain situations. In particular, we show the existence of metrics, arbitrarily close to the flat one on the plane, that do not support stationary solutions to the static Keller--Segel equation (with any mass).

	Finally, as a complementary result, we prove a curved version of the logarithmic Hardy--Littlewood--Sobolev inequality and use it to show that the Keller--Segel free energy is bounded from below exactly when the mass is $8 \pi$, even in the curved case.
\end{abstract}

\maketitle

\section{Introduction}

The Keller--Segel type equations describe \emph{chemotaxis}, that is the movement of organisms (typically bacteria) in the presence of a (chemical) substance. The simplest Keller--Segel system is a pair of equations on the density of the organisms, $\varrho$, and the concentration of the substance, $c$, both of which are functions on $[0, T) \times \rl^n$. Furthermore, $\varrho$ is assumed to be nonnegative and integrable. Together they satisfy the (parabolic-elliptic) Keller--Segel equations:
\begin{subequations}
\begin{align}
	\left( \del_t + \Delta \right) \varrho	&= \rd^* \left( \varrho \rd c \right), \label{eq:KS1} \\
	\Delta c								&= \varrho, \label{eq:KS2}
\end{align}
\end{subequations}
where $\rd$ is the gradient, $\rd^*$ is its $L^2$-dual (the divergence), and $\Delta = \rd^* \rd$. The mass of $\varrho$ is
\begin{equation}
	m \eqdef \int\limits_{\rl^d} \varrho (x) \rd^n x \in \rl_+,
\end{equation}
is a conserved quantity.

\smallskip

Stationary solutions to \cref{eq:KS1,eq:KS2} satisfy
\begin{subequations}
\begin{align}
	\Delta \varrho	&= \rd^* \left( \varrho \rd c \right), \label{eq:static_KS1} \\
	\Delta c		&= \varrho. \label{eq:static_KS2}
\end{align}
\end{subequations}
There is some ambiguity in the choice of $c$ in \cref{eq:static_KS1,eq:static_KS2}, and the standard choice is to use the Green's function of the Laplacian to eliminate $c$ and \cref{eq:static_KS2} via
\begin{equation}
	c_\varrho (x) \eqdef - \frac{1}{2 \pi} \int\limits_{\rl^2} \ln \left( |x - y| \right) \varrho (y) \rd^2 y,
\end{equation}
and use the single equation
\begin{equation}
	\Delta \varrho = \rd^* \left( \varrho \rd c_\varrho \right). \label{eq:static_KS}
\end{equation}
There is a well-known family of solutions to \cref{eq:static_KS}: Let $\lambda \in \rl_+$ and $x_\star \in \rl^2$ be arbitrary, and define
\begin{equation}
	\varrho_{\lambda, x_\star} \eqdef \frac{8 \lambda^2}{\left( \lambda^2 + |x - x_\star|^2 \right)^2}. \label{eq:flat_solution}
\end{equation}
Then $\varrho_{\lambda, x_\star}$ is a solution to \cref{eq:static_KS} with $m = 8 \pi$.

When the metric is the standard, euclidean metric on $\rl^2$, the literature of \cref{eq:KS1,eq:KS2} and \cref{eq:static_KS} is vast; the Reader may find good introductions in \cites{DP04,BDP06,CD12}. Very little is known about the curved case, that is, when the underlying space is not the (flat) plane. We remark here the work of \cite{MP20}, where the authors considered \cref{eq:KS1,eq:KS2} on the hyperbolic plane.

In this paper, we study the case when the metric is conformally equivalent to the flat metric and the conformal factor has the form $e^{2 \varphi}$, where $\varphi$ is smooth and compactly supported. Let us note that some of our results are novel already in the flat $(\varphi = 0)$ case. In particular, we prove that (under very mild hypotheses), solutions to \cref{eq:static_KS} have mass $8 \pi$.

\subsection*{Outline of the paper}

In \Cref{sec:static_KS}, we introduce the static Keller--Segel equation on the curved plane $\left( \rl^2, e^{2 \varphi} g_0 \right)$. In \Cref{sec:reduction}, we prove in \Cref{theorem:reduced_KS_static} that, under mild hypothesis of the growth of $\varrho$, the static Keller--Segel equation can be reduced to a simpler equation (see in \cref{eq:reduced_KS_static}). Furthermore, in \Cref{cor:rho_bounds}, we give sharp bounds on the decay rate of $\varrho$ and in \Cref{theorem:critical_mass} we show that a (nonzero) solution must have $m = 8 \pi$. In \Cref{sec:KW}, we explore a connection between solutions to the (reduced) static Keller--Segel equation and Kazdan--Warner equation on the round sphere. As an application, we prove the nonexistence of solutions for certain conformal factors in \Cref{theorem:nonexistence}. Finally, in \Cref{sec:curved_log-HLS}, we prove the logarithmic Hardy--Littlewood--Sobolev for $\left( \rl^2, e^{2 \varphi} g_0 \right)$ and in \Cref{sec:KS_fe}, as an application, we show that, as in the flat case, the Keller--Segel free energy on $\left( \rl^2, e^{2 \varphi} g_0 \right)$ is bounded from below only when $m = 8 \pi$.

\medskip

\begin{acknowledgment}
	I thank Michael Sigal for introducing me to the topic and for his initial guidance. I also thank the referee for their helpful recommendations.
\end{acknowledgment}

\bigskip

\section{The curved, static Keller--Segel equation}
\label{sec:static_KS}

Let $g_0$ be the standard metric on $\rl^2$, let $\varphi \in C_\cpt^\infty \left( \rl^2 \right)$, let $g_\varphi \eqdef e^{2 \varphi} g_0$. Let $L_k^p \left( \rl^2, g_\varphi \right)$ be Banach space of functions on $\rl^2$ that are $L_k^p$ with respect to $g_\varphi$. Note that the properties of being bounded in $L_{1, \loc}^2$ are independent of the chosen metric. Finally, let $L_+^1  \left( \rl^2, g_\varphi \right) \subseteq L^1 \left( \rl^2, g_\varphi \right)$ be the space of almost everywhere positive functions.

The area form and the Laplacian behave under a conformal change via
\begin{equation}
	\dA_\varphi = e^{2 \varphi} \dA_0 \quad \& \quad \Delta_\varphi = e^{- 2 \varphi} \Delta_0.
\end{equation}
Thus the Green's function is conformally invariant:
\begin{equation}
	G (x, y) = - \frac{1}{2 \pi} \ln \left( |x - y| \right).
\end{equation}
For any $\varrho \in L_+^1 \left( \rl^2, g_\varphi \right)$, let
\begin{equation}
	c_{\varphi, \varrho} \eqdef \int\limits_{\rl^2} G (\cdot, y) \varrho (y) \dA_\varphi (y), \label{eq:c_def}
\end{equation}
when the integral exists. Assume that the function $\varrho \in L_+^1 \left( \rl^2, g_\varphi \right) \cap L_{1, \loc}^2$ is such that $c_{\varphi, \varrho}$ is defined on $\rl^2$. Then $\varrho$ is a solution to the \emph{static Keller--Segel equation} on $\left( \rl^2, g_\varphi \right)$ if it solves (the weak version of)
\begin{equation}
	\Delta_\varphi \varrho - \rd^* \left( \varrho \rd c_{\varphi, \varrho} \right) = 0. \label{eq:KS_static}
\end{equation}
In the next section we prove that, under mild hypotheses, \cref{eq:KS_static} is equivalent to the simpler
\begin{equation}
	\rd \left( \ln \left( \varrho \right) - c_{\varphi, \varrho} \right) = 0. \label{eq:reduced_KS_static}
\end{equation}
We call \cref{eq:reduced_KS_static} the \emph{reduced, static Keller--Segel equation}.

\smallskip

In applications it is always assumed that $\varrho$ has finite mass. Furthermore, the minimal regularity needed for the weak version of \cref{eq:KS_static} is $L_{1, \loc}^2$ and the fact that $c_{\varphi, \varrho}$ is defined. Finally, we impose the finiteness of the entropy: $\varrho \ln \left( \varrho \right) \in L^1 \left( \rl^2, g_\varphi \right)$. This is implied by, for example, the finiteness of the Keller--Segel free energy; cf \Cref{sec:KS_fe}. With that in mind, we define the \emph{(curved) Keller--Segel configuration space} as:
\begin{equation}
	\cC_\KS (m, \varphi) \eqdef \left\{ \ \varrho \in L_+^1 \left( \rl^2, g_\varphi \right) \cap L_{1, \loc}^2 \ \middle| \ \begin{array}{l} \varrho \ln \left( \varrho \right) \in L^1 \left( \rl^2, g_\varphi \right), \\ \| \varrho \|_{L^1 \left( \rl^2, g_\varphi \right)} = m, \\ c_{\varphi, \varrho} \mbox{ is defined everywhere.} \end{array} \right\}. \label{eq:KS_config_space}
\end{equation}
Let $r (x) \eqdef |x|$ be the euclidean radial function. First we prove a bound on $c_{\varphi, \varrho}$.

\smallskip

\begin{lemma}
	\label{lemma:c_bound}
	Let $\varrho \in \cC_\KS (m, \varphi)$ be a solution of the static Keller--Segel \cref{eq:KS_static}. Then the function $c_{\varphi, \varrho} + \tfrac{m}{4 \pi} \ln \left( 1 + r^2 \right)$ is bounded.
\end{lemma}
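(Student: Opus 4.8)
The plan is to compare $c_{\varphi,\varrho}$ with the ``Newtonian potential at infinity'' $-\tfrac{m}{2\pi}\ln r$ by splitting the defining integral \eqref{eq:c_def} over the regions where $|y|$ is comparable to $|x|$ and where it is not. Writing $c_{\varphi,\varrho}(x) = -\tfrac{1}{2\pi}\int_{\rl^2}\ln(|x-y|)\,\varrho(y)\dA_\varphi(y)$, I would estimate $\ln|x-y| - \ln(1+|x|)$ uniformly in the three zones $\{|y|\le \tfrac12|x|\}$, $\{\tfrac12|x|\le|y|\le 2|x|\}$, and $\{|y|\ge 2|x|\}$. On the first and third zones the elementary inequalities $|x-y|\asymp\max(|x|,|y|)$ give pointwise control of $\big|\ln|x-y| - \ln(1+r^2)^{1/2}\big|$ by a constant plus $\big|\ln(1+|y|)\big|$, and integrating against $\varrho\dA_\varphi$ costs only $\|\varrho\|_{L^1} = m$ together with a finite contribution from $\int \ln(1+|y|)\varrho\,\dA_\varphi$; the latter is finite because $\varrho\ln\varrho\in L^1$ forces, via the elementary bound $\varrho\ln(1+|y|)\le \varrho\ln\varrho + \varrho e^{-1}(1+|y|)^{-1}\cdot(\dots)$ — more cleanly, one uses that on the set where $\varrho(y)\le (1+|y|)^{-3}$ the logarithmic moment converges by direct comparison, and on its complement $\ln(1+|y|)\le\tfrac13|\ln\varrho|\le\tfrac13\varrho^{-1}|\varrho\ln\varrho|$ — wait, that needs $\varrho$ bounded below, so instead bound $\int_{\{\varrho>(1+|y|)^{-3}\}}\varrho\ln(1+|y|) \le \tfrac13\int \varrho|\ln\varrho| + \tfrac13\int_{\{\varrho\le 1\}}\varrho|\ln\varrho|$, all finite. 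Thus the zones $\{|y|\le\tfrac12|x|\}$ and $\{|y|\ge2|x|\}$ contribute a bounded function plus $\tfrac{m}{2\pi}\ln(1+|x|)$ up to bounded error.

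The only delicate zone is the annulus $A_x \eqdef \{\tfrac12|x|\le|y|\le2|x|\}$, where $\ln|x-y|$ is not pointwise bounded (it blows up near $y=x$) but is only logarithmically singular and hence locally integrable. Here I would use the $L^p_{1,\loc}$ hypothesis on $\varrho$: since $\varphi$ has compact support, $\dA_\varphi = e^{2\varphi}\dA_0$ is comparable to $\dA_0$ on $A_x$ once $|x|$ is large, and $\varrho\in L^2_{1,\loc}$ gives, via Sobolev embedding on the unit-scale balls covering $A_x$, a uniform $L^q$ bound on $\varrho$ over each such ball for every $q<\infty$; then $\int_{A_x}|\ln|x-y||\,\varrho(y)\,\dA_0(y)$ is controlled by a Hölder pairing of the $L^q$ norm of $\varrho$ (summed over the $O(|x|)$ unit balls, but weighted by the mass $\int_{A_x}\varrho\to 0$) against the $L^{q'}$ norm of the log-singularity, which is uniformly bounded at unit scale. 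Combined with the fact that $\int_{A_x}\varrho\,\dA_\varphi\to 0$ as $|x|\to\infty$ (tail of an $L^1$ function), this annular term is bounded, and in fact tends to $0$.

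Assembling the three estimates: $c_{\varphi,\varrho}(x) = -\tfrac{m}{2\pi}\ln(1+|x|) + O(1) = -\tfrac{m}{4\pi}\ln(1+r^2) + O(1)$, which is the claim. I expect the main obstacle to be the rigorous treatment of the annular zone $A_x$: one must extract a \emph{uniform} (in $|x|$) bound on the logarithmic potential of $\varrho$ restricted to an expanding annulus, and the natural way is to rescale each unit ball to fixed size, apply the scale-invariant part of the Sobolev/Hölder estimate, and sum while exploiting the decay of $\int_{A_x}\varrho$. A secondary technical point is verifying finiteness of the logarithmic moment $\int\ln(1+|y|)\,\varrho\,\dA_\varphi$ purely from $\varrho\ln\varrho\in L^1$ and $\varrho\in L^1$, which is the elementary splitting indicated above; this is where the entropy hypothesis in the definition of $\cC_\KS(m,\varphi)$ is used.
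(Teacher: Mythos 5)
Your three-zone decomposition is fine for the outer zones, but two of your justifications do not hold up. First, the finiteness of the logarithmic moment $\int_{\rl^2}\ln(1+|y|)\,\varrho\,\dA_\varphi$ does \emph{not} follow from $\varrho\in L^1$ and $\varrho\ln(\varrho)\in L^1$: take smooth bumps with $0\leqslant\varrho\leqslant 1$, mass $k^{-2}$, centered at points with $|y_k|=e^{k^2}$; the mass is finite, the entropy is finite (each bump contributes at most $e^{-1}$ times its tiny support area), yet the logarithmic moment diverges like $\sum_k k^{-2}\cdot k^{2}$. The fact you need is true, but for a different reason: membership in $\cC_\KS(m,\varphi)$ requires $c_{\varphi,\varrho}$ to be defined \emph{everywhere} (see \cref{eq:KS_config_space}), and evaluating \cref{eq:c_def} at the origin gives exactly $\int_{\rl^2}|\ln r|\,\varrho\,\dA_\varphi<\infty$ — this is precisely how the paper's proof starts, and it is this hypothesis (not the entropy) that rules out the bumps above.

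The serious gap is the annulus $A_x$ near the diagonal. $\varrho\in L^2_{1,\loc}$ only says the $L^2_1$-norm is finite on each compact set; it gives no bound, uniform in $x$, on the unit balls $B_1(x)$ as $|x|\to\infty$, so there is no ``uniform $L^q$ bound'' to feed into H\"older, and smallness of $\int_{A_x}\varrho$ cannot compensate, since H\"older needs the $L^q$-norm, which may blow up along a sequence of far-away balls even while the local $L^1$-mass tends to zero. What the upper bound on $c_{\varphi,\varrho}$ actually requires is $\sup_x\int_{B_1(x)}\bigl(-\ln|x-y|\bigr)\varrho(y)\,\dA_0(y)<\infty$, and here the entropy is the right tool (this is where it genuinely enters the lemma): the convex-duality inequality $st\leqslant e^s+t\ln t-t$ with $s=-\ln|x-y|$ and $t=\varrho(y)$ gives $\int_{B_1(x)}\bigl(-\ln|x-y|\bigr)\varrho\,\dA_0\leqslant\int_{B_1(0)}|z|^{-1}\dA_0(z)+\|\varrho\ln(\varrho)\|_{L^1}$, uniformly in $x$; the part of $A_x$ with $|x-y|\geqslant 1$ is then handled by the tail of the logarithmic moment, as you intended. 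For comparison, the paper does not attempt any uniform local regularity at infinity: its upper bound splits at $B_{|x|/2}(x)$ and uses the finiteness of $c_{\varphi,\varrho}(0)$, and its lower bound is Jensen's inequality for $-\ln$ against the probability measure $\varrho\,\dA_\varphi/m$, where $L^2_{1,\loc}$ is used only on a fixed ball around the origin (to integrate $|y|^{-1}\varrho$ there), which is a legitimate, compact-set use of that hypothesis.
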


\begin{proof}
	As $\Delta_\varphi c_{\varphi, \varrho} \in L^1 \left( B_1 (0), g_\varphi \right)$, it is enough to prove, without any loss of generality, the boundedness of $c_{\varphi, \varrho} + \tfrac{m}{2 \pi} \ln \left( r \right)$, when $r \geqslant 1$.

	Since $c_{\varphi, \varrho} (0) = - \tfrac{1}{2 \pi} \int_{\rl^2} \varrho \ln (r) \dA_\varphi$ is finite, we have that
	\begin{align}
		c_{\varphi, \varrho} (x)	&\leqslant o (1) - \frac{1}{2 \pi} \int\limits_{B_{|x|/2} (x)} \ln \left( |x - y| \right) \varrho (y) \dA_\varphi (y) \\
									&\leqslant o (1) - \frac{1}{2 \pi} \ln \left( |x| \right) \int\limits_{B_{|x|/2} (x)} \varrho \dA_\varphi \\
									&\leqslant O (1) - \frac{m}{2 \pi} \ln \left( |x| \right) + \frac{1}{2 \pi} \int\limits_{\rl^2 - B_{|x|/2} (x)} \ln (r) \varrho \dA_\varphi \\
									&\leqslant O (1) - \frac{m}{2 \pi} \ln \left( |x| \right).
	\end{align}
	This proves the upper bound.

	In order to get the lower bound, let us use Jensen's inequality to get
	\begin{equation}
		c_{\varphi, \varrho} (x) - c_{\varphi, \varrho} (0) = - \frac{m}{2 \pi} \int\limits_{\rl^2} \ln \left( \frac{|x - y|}{|y|} \right) \frac{\varrho (y) \dA_\varphi (y)}{m} \geqslant - \frac{m}{2 \pi} \ln \left( \ \int\limits_{\rl^2} \frac{|x - y|}{|y|} \varrho (y) \dA_\varphi (y) \right) + \frac{m}{2 \pi} \ln (m).
	\end{equation}
	Since $\varrho \in L_{1, \loc}^2$, we get that there exists $\delta > 0$, such that for all $p > 1$, $\varrho \in L^p \left( B_\delta (0) \right)$. We can assume that $\delta \leqslant 1$. Since for all $q \in [1, 2)$, $r^{- 1} \in L^q \left( B_\delta (0) \right)$ and $\tfrac{|x - y|}{|y|} \leqslant \tfrac{\sqrt{|x|^2 + \delta^2}}{\delta}$ on $\rl^2 - B_\delta (0)$, we get that, for any $p > 1$, that
	\begin{align}
		\int\limits_{\rl^2} \frac{|x - y|}{|y|} \varrho (y) \dA_\varphi (y)	&= \left( \ \int\limits_{B_\delta (0)} + \int\limits_{\rl^2 - B_\delta (0)} \right) \frac{|x - y|}{|y|} \varrho (y) \dA_\varphi (y) \\
		&\leqslant \left( e^{2 \| \varphi \|_{L^\infty \left( B_\delta (0) \right)}} \| \varrho \|_{L^p \left( B_\delta (0) \right)} \| r^{- 1} \|_{L^{\frac{p}{p - 1}} \left( B_\delta (0) \right)} + m \right) \frac{\sqrt{|x|^2 + \delta^2}}{\delta}.
	\end{align}
	Thus, when $r \geqslant 1$, we get that
	\begin{equation}
		c_{\varphi, \varrho} + \frac{m}{2 \pi} \ln \left( r \right) \geqslant C (\varphi, \varrho),
	\end{equation}
	which completes the proof.
\end{proof}

\bigskip

\section{Reduction of order and the necessity of $m = 8 \pi$}
\label{sec:reduction}

\begin{theorem}
	\label{theorem:reduced_KS_static}
	Let $\varrho \in \cC_\KS (m, \varphi)$ be a solution of the static Keller--Segel \cref{eq:KS_static}. Furthermore assume the following bound: there exists a positive number $C$, such that on $\rl^2 - B_C (0)$, we have
	\begin{equation}
		\varrho \leqslant C r^{Cr^2}. \label[cond]{cond:hypothesis}
	\end{equation}
	Then the reduced, static Keller--Segel \cref{eq:reduced_KS_static} holds, that is $\rd \left( \ln \left( \varrho \right) - c_{\varphi, \varrho} \right) = 0$.
\end{theorem}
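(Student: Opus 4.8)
The plan is to rewrite \eqref{eq:KS_static} as a divergence-form equation for the single scalar $w \eqdef \ln ( \varrho ) - c_{\varphi, \varrho}$ and then close the problem with a weighted Caccioppoli estimate, in which \cref{lemma:c_bound} and the growth bound \cref{cond:hypothesis} are used precisely to make the resulting boundary term disappear. First I would upgrade regularity: since $\varrho \in L_{1, \loc}^2$ it lies in every $L^p$ on balls, so (recall $\Delta_\varphi c_{\varphi, \varrho} = \varrho$) $c_{\varphi, \varrho} \in W_{\loc}^{2, p}$ for all $p$ and $\rd c_{\varphi, \varrho}$ is locally bounded; feeding this into \eqref{eq:KS_static}, written as $\Delta_\varphi \varrho + \langle \rd c_{\varphi, \varrho} , \rd \varrho \rangle - \varrho^2 = 0$, and bootstrapping yields $\varrho \in C^\infty$, after which the strong maximum principle (the nonlinearity $- \varrho^2$ has the favorable sign) promotes $\varrho > 0$ a.e.\ to $\varrho > 0$ everywhere. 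Hence $w \in C^\infty ( \rl^2 )$, and since $\rd \varrho = \varrho \, \rd \ln ( \varrho )$ we have $\Delta_\varphi \varrho = \rd^* ( \varrho \, \rd \ln ( \varrho ) )$, so \eqref{eq:KS_static} is equivalent to
\begin{equation}
	\rd^* \left( \varrho \, \rd w \right) = 0 ,
\end{equation}
and \eqref{eq:reduced_KS_static} is the assertion $\rd w = 0$.

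Next I would set up the energy estimate. Let $\chi_R$ be a smooth radial cutoff, equal to $1$ on $B_R ( 0 )$, supported in $B_{2 R} ( 0 )$, with $| \rd \chi_R | \leqslant C / R$, and test the equation against $\chi_R^2 w \in C_\cpt^\infty ( \rl^2 )$; the usual Cauchy--Schwarz and Young step gives
\begin{equation}
	\int\limits_{\rl^2} \varrho \, \chi_R^2 \, | \rd w |^2 \dA_\varphi \leqslant \frac{C}{R^2} \int\limits_{B_{2 R} ( 0 ) \setminus B_R ( 0 )} \varrho \, w^2 \dA_\varphi .
\end{equation}
It therefore suffices to send the right-hand side to $0$ along some sequence $R_j \to \infty$: monotone convergence then forces $\int_{\rl^2} \varrho \, | \rd w |^2 \dA_\varphi = 0$, hence $\rd w = 0$ because $\varrho > 0$. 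For later use I record that $\varrho \, | w | \in L^1 ( \rl^2 , g_\varphi )$: indeed $\varrho \, | \ln ( \varrho ) | \in L^1$ by the definition of $\cC_\KS ( m , \varphi )$, while $\varrho \, | c_{\varphi, \varrho} | \leqslant C \varrho + \tfrac{m}{4 \pi} \varrho \ln ( 1 + r^2 ) \in L^1$ by \cref{lemma:c_bound} together with the finiteness of $c_{\varphi, \varrho} ( 0 ) = - \tfrac{1}{2 \pi} \int_{\rl^2} \ln ( r ) \varrho \dA_\varphi$.

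To control the annular integral I would split $w^2 = ( w^+ )^2 + ( w^- )^2$. For the negative part, $\varrho \, ( w^- )^2 \leqslant 4 e^{- 2} \, e^{c_{\varphi, \varrho}}$ pointwise (on $\{ w < 0 \}$ write $\varrho = e^{c_{\varphi, \varrho} + w}$, so $\varrho \, ( w^- )^2 = e^{c_{\varphi, \varrho}} ( e^w w^2 )$ and use $\sup_{t > 0} t^2 e^{- t} = 4 e^{- 2}$; on $\{ w \geqslant 0 \}$ it is trivial), and \cref{lemma:c_bound} gives $e^{c_{\varphi, \varrho}} \leqslant e^C ( 1 + r^2 )^{- m / 4 \pi}$; since $\int_{B_{2 R} ( 0 ) \setminus B_R ( 0 )} ( 1 + r^2 )^{- m / 4 \pi} \dA_\varphi \leqslant C R^{2 - m / 2 \pi}$, the negative part contributes at most $C R^{- m / 2 \pi}$, which vanishes for \emph{every} $R \to \infty$ since $m > 0$. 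For the positive part, \cref{cond:hypothesis} and \cref{lemma:c_bound} give $w^+ \leqslant C r^2 \ln r$ for large $r$, so on the dyadic shell $S_k \eqdef B_{2^{k + 1}} ( 0 ) \setminus B_{2^k} ( 0 )$ one has $\sup_{S_k} w^+ \leqslant C \, 4^k k$ (for $k$ large), whence the contribution of $S_k$ to the right-hand side above is at most $C k \mu_k$ with $\mu_k \eqdef \int_{S_k} \varrho \, w^+ \dA_\varphi$. The shells are disjoint and $\sum_k \mu_k \leqslant \int_{\rl^2} \varrho \, | w | \dA_\varphi < \infty$, so $\liminf_k k \mu_k = 0$; picking $k_j \to \infty$ with $k_j \mu_{k_j} \to 0$ and setting $R_j \eqdef 2^{k_j}$, both the positive and the negative parts of the annular integral vanish along $R_j$, which finishes the proof.

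The step I expect to be the real obstacle is the positive-part estimate: \cref{cond:hypothesis} only supplies the crude bound $w^+ = O ( r^2 \ln r )$, which competes directly with the mere summability of the shell masses $\mu_k$, forcing the pigeonhole to be run \emph{on shells} (the analogous argument at the level of tails $\int_{\rl^2 \setminus B_R ( 0 )} \varrho \, | w | \dA_\varphi$ would fail). To dispense with passing to a subsequence one would instead want the slightly stronger $\varrho \, | w | \ln ( 1 + r ) \in L^1$, which yields $( \ln R ) \int_{\rl^2 \setminus B_R ( 0 )} \varrho \, | w | \dA_\varphi \to 0$ outright; everything else --- the regularity bootstrap, the reduction to $\rd^* ( \varrho \, \rd w ) = 0$, and the negative-part estimate --- is routine once \cref{lemma:c_bound} is in hand.
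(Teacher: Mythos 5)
Your argument is correct and follows the same skeleton as the paper's proof: rewrite \eqref{eq:KS_static} weakly as $\rd^*\left( \varrho \, \rd w \right) = 0$ for $w = \ln \left( \varrho \right) - c_{\varphi, \varrho}$, run a cutoff (Agmon/Caccioppoli) estimate that reduces everything to showing $R^{-2} \int_{A_R} \varrho \, w^2 \dA_0 \to 0$ along some sequence of radii, and produce that sequence by a dyadic pigeonhole based on an $L^1$ bound. The differences are in how the annular term is split, and they are genuine though minor. The paper separates $w$ into $\ln \left( \varrho \right)$ and $c_{\varphi, \varrho}$, kills the $c$-part outright by the sup bound from \Cref{lemma:c_bound}, and splits the annulus according to whether $\varrho \leqslant r^{-C r^2}$ or $r^{-C r^2} \leqslant \varrho \leqslant r^{C r^2}$, running the pigeonhole on $\int_{A_{R_k}} \varrho \left| \ln \left( \varrho \right) \right| \dA_0$, so that only $\varrho \ln \left( \varrho \right) \in L^1$ is used. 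You instead split by the sign of $w$: your pointwise bound $\varrho \left( w^- \right)^2 \leqslant 4 e^{-2} e^{c_{\varphi, \varrho}} \lesssim \left( 1 + r^2 \right)^{- m / 4 \pi}$ is a slicker substitute for the paper's $A_{R, I}$ estimate (it needs neither \cref{cond:hypothesis} nor a subsequence on that part), while your positive-part pigeonhole needs $\varrho \, w^+ \in L^1$ and hence the additional fact $\varrho \left| c_{\varphi, \varrho} \right| \in L^1$, which you correctly obtain from the everywhere-definedness of $c_{\varphi, \varrho}$ (finiteness of $\int_{\rl^2} \varrho \left| \ln r \right| \dA_\varphi$) combined with \Cref{lemma:c_bound}; the paper's bookkeeping avoids that extra input. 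Your preliminary regularity bootstrap and maximum-principle positivity do not appear in the paper, which works purely with the weak formulation and the test functions $f \chi_R^2$; they are not needed for the conclusion, but they are harmless and do make the admissibility of your test function $\chi_R^2 w$ transparent.
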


\smallskip

\begin{remark}
	If $\varrho \in L^\infty \left( \rl^2 \right)$, then \cref{cond:hypothesis} is trivially satisfied with $C = \max \left( 1, \| \varrho \|_{L^\infty \left( \rl^2 \right)} \right)$. We conjecture that \cref{cond:hypothesis} is not necessary in general for the conclusion \Cref{theorem:reduced_KS_static} to hold.
\end{remark}

\smallskip

\begin{remark}
	A corollary of the reduced, static Keller--Segel \cref{eq:reduced_KS_static} is that the (nonreduced) static Keller--Segel \cref{eq:KS_static} is no longer nonlocal, as $c_{\varrho, \varphi}$ can be eliminated using $\rd c_{\varphi, \varrho} = \rd \left(  \ln \left( \varrho \right) \right) = \tfrac{\rd \varrho}{\varrho}$, and get
	\begin{equation}
		\rd^* \left( \varrho \rd c_{\varphi, \varrho} \right) = - g_\varphi \left( \rd \varrho, \rd c_{\varphi, \varrho} \right) + \varrho \Delta_\varphi c_{\varphi, \varrho} = - g_\varphi \left( \rd \varrho, \tfrac{\rd \varrho}{\varrho} \right) + \varrho^2 = - \frac{|\rd \varrho|_\varphi^2}{\varrho} + \varrho^2.
	\end{equation}
	Thus the static Keller--Segel \cref{eq:KS_static} becomes
	\begin{equation}
		\Delta_\varphi \varrho + \frac{|\rd \varrho|_\varphi^2}{\varrho} - \varrho^2 = 0.
	\end{equation}
\end{remark}

\smallskip

\begin{proof}[Proof of \Cref{theorem:reduced_KS_static}]
	Let $f \eqdef \ln \left( \varrho \right) - c_{\varphi, \varrho}$. The static Keller--Segel \cref{eq:KS_static} implies that
	\begin{equation}
		\forall R \in \rl_+ : \forall \phi \in L_{1, 0}^2 \left( B_R (0), g_\varphi \right) : \quad \int\limits_{\rl^2} \varrho g_0 \left( \rd \phi, \rd f \right) \dA_0 = 0. \label{eq:new_KS_static}
	\end{equation}
	We now apply an Agmon-trick type argument: Let $\chi$ be a smooth and compactly supported function. Then, using $\phi = f \chi^2$ in the second row, we get
	\begin{align}
		\int\limits_{\rl^2} \varrho \left| \rd \left( \chi f \right) \right|^2 \dA_0	&= \int\limits_{\rl^2} \varrho \left| \rd \chi \right|^2 f^2 \dA_0 + 2 \int\limits_{\rl^2} \varrho f \chi g_0 \left( \rd \chi, \rd f \right) \dA_0 + \int\limits_{\rl^2} \varrho \chi^2 \left| \rd f \right|^2 \dA_0 \\
		&= \int\limits_{\rl^2} \varrho \left| \rd \chi \right|^2 f^2 \dA_0 + \int\limits_{\rl^2} \varrho g_0 \left( \rd \left( f \chi^2 \right), \varrho \rd f \right) \dA_0 - \int\limits_{\rl^2} \varrho \chi^2 \left| \rd f \right|^2 \dA_0 + \int\limits_{\rl^2} \varrho \chi^2 \left| \rd f \right|^2 \dA_0 \\
		&= \int\limits_{\rl^2} \varrho \left| \rd \chi \right|^2 f^2 \dA_0. \label[ineq]{ineq:Agmon}
	\end{align}
	Now for each $R \gg 1$, let $\chi = \chi_R$ be a smooth cut-off function that is 1 on $B_R (0)$, vanishes on $\rl^2 - B_{2 R} (0)$, and (for some $K \in \rl_+$) $|\rd \chi_R| = \tfrac{K}{R}$. Let $A_R = B_{2 R} (0) - B_R (0)$. Then we get that
	\begin{equation}
		\int\limits_{\rl^2} \varrho |\rd f|^2 \dA_0 \leqslant \liminf\limits_{R \rightarrow \infty} \int\limits_{\rl^2} \varrho \left| \rd \left( \chi_R f \right) \right|^2 \dA_0 = \liminf\limits_{R \rightarrow \infty} \int\limits_{\rl^2} \varrho \left| \rd \chi_R \right|^2 f^2 \dA_0 \leqslant \liminf\limits_{R \rightarrow \infty} \frac{K^2}{R^2} \int\limits_{A_R} \varrho f^2 \dA_0.
	\end{equation}
	To complete the proof, we show now that the last limit inferior is zero. Since
	\begin{equation}
		\int\limits_{A_R} \varrho f^2 \dA_0 \leqslant \left( \sqrt{\int\limits_{A_R} \varrho \ln \left( \varrho \right)^2 \dA_0} + \sqrt{\int\limits_{A_R} \varrho c_{\varphi, \varrho}^2 \dA_0} \right)^2,
	\end{equation}
	it is enough to show that both terms under the square roots are $o \left( R^2 \right)$, at least for some divergent sequence of radii. This is immediate for the second term by \Cref{lemma:c_bound}. To bound the first term, let $C$ be the constant from \cref{cond:hypothesis} and break up $A_R$ into 2 pieces:
	\begin{align}
		A_{R, I}	&\eqdef \left\{ \ x \in A_R \ \middle| \ \varrho (x) \leqslant r (x)^{- C r(x)^2} \ \right\}, \\
		A_{R, II}	&\eqdef \left\{ \ x \in A_R \ \middle| \ r (x)^{- C r(x)^2} \leqslant \varrho (x) \leqslant r (x)^{C r(x)^2} \ \right\}.
	\end{align}
	By \cref{cond:hypothesis}, $A_R = A_{R, I} \cup A_{R, II}$. Let is first inspect
	\begin{equation}
		0 \leqslant \int\limits_{A_{R, I}} \varrho \ln \left( \varrho \right)^2 \dA_0 \leqslant C (2 R)^{- C (2 R)^2} \ln \left( C (2 R)^{- C (2 R)^2} \right)^2 \Ar \left( A_{R, I}, g_0 \right) = o \left( R^2 \right).
	\end{equation}
	Finally, note that on $A_{R, II}$, we have $\left| \ln \left( \varrho \right) \right| = O \left( R^2 \ln (R) \right)$. Thus, for $R \gg 1$, we have
	\begin{equation}
		0 \leqslant \int\limits_{A_{R, II}} \varrho \ln \left( \varrho \right)^2 \dA_0	\leqslant \| \ln \left( \varrho \right) \|_{L^\infty \left( A_{R, II} \right)} \int\limits_{A_{R, II}} \varrho \left| \ln \left( \varrho \right) \right| \dA_0 \leqslant 8 C R^2 \ln \left( R \right) \int\limits_{A_{R, II}} \varrho \left| \ln \left( \varrho \right) \right| \dA_0.
	\end{equation}
	Now let $R_k \eqdef 2^k$, and then
	\begin{equation}
		0 \leqslant \frac{1}{R_k^2} \int\limits_{A_{R_k, II}} \varrho \ln \left( \varrho \right)^2 \dA_0 \leqslant 8 C \ln (2) k \int\limits_{A_{R_k, II}} \varrho \left| \ln \left( \varrho \right) \right| \dA_0.
	\end{equation}
	Since $\varrho \ln \left( \varrho \right) \in L^1 \left( \rl^2, g_0 \right)$ we have that
	\begin{equation}
		\liminf\limits_{k \rightarrow \infty} \: \left( k \int\limits_{A_{R_k, II}} \varrho \left| \ln \left( \varrho \right) \right| \dA_0 \right) = 0,
	\end{equation}
	and thus
	\begin{equation}
		0 \leqslant \int\limits_{\rl^2} \varrho \left| \rd f \right|^2 \dA_0 \leqslant \liminf\limits_{k \rightarrow \infty} \frac{K^2}{R_k^2} \int\limits_{A_{R_k}} \varrho f^2 \dA_0 = 0,
	\end{equation}
	and hence
	\begin{equation}
		\int\limits_{\rl^2} \varrho \left| \rd f \right|^2 \dA_0 = 0,
	\end{equation}
	which implies \cref{eq:reduced_KS_static}, and thus completes the proof.
\end{proof}

\smallskip

\begin{corollary}
\label{cor:rho_bounds}
	If $\varrho \in \cC_\KS$ is a solution of the static Keller--Segel \cref{eq:KS_static} and satisfies \cref{cond:hypothesis}, then there is a number $K = K (\varphi, \varrho) \geqslant 1$, such that
	\begin{equation}
		K \geqslant \varrho \left( 1 + r^2 \right)^{\frac{m}{4 \pi}} \geqslant K^{- 1}. \label[ineq]{ineq:rho_bounds}
	\end{equation}
	In particular, $\varrho \sim r^{- \frac{m}{2 \pi}}$ and $m > 4 \pi$. 
\end{corollary}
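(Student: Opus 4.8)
The plan is to integrate the reduced equation and combine it with the logarithmic growth bound of \Cref{lemma:c_bound}. Since $\varrho$ satisfies \cref{cond:hypothesis}, \Cref{theorem:reduced_KS_static} applies and yields $\rd\left(\ln(\varrho) - c_{\varphi, \varrho}\right) = 0$. As $\rl^2$ is connected, the function $f \eqdef \ln(\varrho) - c_{\varphi, \varrho}$ must (almost everywhere) equal a constant $C_0 \in \rl$; equivalently $\varrho = e^{C_0} e^{c_{\varphi, \varrho}}$. Here $c_{\varphi, \varrho}$ is continuous, being a logarithmic potential of a density that is locally in every $L^p$ by the $L^2_{1,\loc}$-hypothesis and Sobolev embedding, so this simultaneously equips $\varrho$ with a continuous, strictly positive representative.

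Next, \Cref{lemma:c_bound} provides a constant $M \geqslant 0$ with $\left| c_{\varphi, \varrho} + \tfrac{m}{4 \pi} \ln\left(1 + r^2\right) \right| \leqslant M$ on all of $\rl^2$, i.e.
\begin{equation}
	e^{- M} \leqslant e^{c_{\varphi, \varrho}} \left( 1 + r^2 \right)^{\frac{m}{4 \pi}} \leqslant e^{M} .
\end{equation}
Multiplying through by $e^{C_0}$ and setting $K \eqdef \max\left( 1, e^{C_0 + M}, e^{M - C_0} \right) \geqslant 1$ gives exactly \cref{ineq:rho_bounds}.

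For the two concluding assertions: since $\left(1 + r^2\right)^{m/(4\pi)} \sim r^{m/(2\pi)}$ as $r \to \infty$, \cref{ineq:rho_bounds} immediately yields $\varrho \sim r^{-m/(2\pi)}$. Because $\varphi$ is compactly supported, $\dA_\varphi = \dA_0$ outside a large ball, so $m = \|\varrho\|_{L^1\left(\rl^2, g_\varphi\right)} < \infty$ forces $\int_1^\infty r^{-m/(2\pi)}\, r \, \rd r < \infty$, which holds precisely when $m/(2\pi) > 2$, that is $m > 4\pi$.

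There is no serious obstacle here — the statement is a direct corollary of \Cref{theorem:reduced_KS_static} and \Cref{lemma:c_bound}. The only point demanding a moment's care is the passage from ``the weak gradient of $f$ vanishes'' to ``$f$ is constant'', which is legitimate because $\rl^2$ is connected and $f \in L^2_{1,\loc}$ (this membership being implicit in the test-function argument that proves \Cref{theorem:reduced_KS_static}); after that, exponentiation together with the two-sided control of $c_{\varphi, \varrho}$ does all the work.
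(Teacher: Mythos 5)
Your proof is correct and follows essentially the same route as the paper: apply \Cref{theorem:reduced_KS_static} to conclude $\ln(\varrho) - c_{\varphi,\varrho}$ is constant, add the bounded quantity $c_{\varphi,\varrho} + \tfrac{m}{4\pi}\ln\left(1+r^2\right)$ from \Cref{lemma:c_bound}, and exponentiate to get \cref{ineq:rho_bounds}. Your explicit derivation of $m > 4\pi$ from finiteness of the mass, and the remark on connectedness justifying ``vanishing weak gradient implies constant,'' are just spelled-out versions of steps the paper leaves implicit.
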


\begin{proof}
	We have
	\begin{equation}
		\ln \left( \varrho \left( 1 + r^2 \right)^{\frac{m}{4 \pi}} \right) = \ln \left( \varrho \right) + \tfrac{m}{4 \pi} \ln \left( 1 + r^2 \right) = \underbrace{\ln \left( \varrho \right) - c_{\varphi, \varrho}}_{\mbox{constant by \Cref{theorem:reduced_KS_static}}} + \underbrace{c_{\varphi, \varrho} + \tfrac{m}{4 \pi} \ln \left( 1 + r^2 \right)}_{\mbox{bounded by \Cref{lemma:c_bound}}},
	\end{equation}
	which concludes the proof.
\end{proof}

\smallskip

\begin{remark}
	\Cref{theorem:reduced_KS_static} remains true (with the same proof) even when $g_\varphi$ is replaced by any compactly supported, smooth perturbation of $g_0$. However proving \Cref{lemma:c_bound} becomes more complicated in that case, although conjecturally, that claim should still hold, and thus so should \Cref{cor:rho_bounds}.
\end{remark}

\smallskip

\begin{remark}
	Before stating our next theorem, let us recall a few facts, commonly used in literature of the Keller--Segel equations.

	First of all, and to the best of our knowledge, the only known solutions in the flat case are the ones given in \cref{eq:flat_solution}. Note that they all have mass $8 \pi$.

	A complementary fact, supporting the conjecture that static solutions must have mass $8 \pi$, is the the following "Virial Theorem" that applies to the time-dependent equation as well: Assume that $\varrho$ is a solution to the (time-dependent) Keller--Segel \cref{eq:KS1,eq:KS2}, such that for all $t$ in the domain of $\varrho$ the following quantity is finite
	\begin{equation}
		W (t) \eqdef \int\limits_{\rl^2} |x|^2 \varrho (t, x) \dA_0 (x).
	\end{equation}
	Then $W$ satisfies the following equation (cf. \cite{BDP06}*{Lemma~22} for the proof):
	\begin{equation}
		\dot{W} (t) = 4 m - \frac{m}{2 \pi}.
	\end{equation}
	In particular, if $\varrho$ is a (positive) solution to the static Keller--Segel \cref{eq:KS_static} with finite $W$, then $m = 8 \pi$. Note that for each $\varrho_{\lambda, x_\star}$ in \cref{eq:flat_solution}, we get $W = \infty$, so the above two results are indeed complementary.
\end{remark}

In the next theorem we prove that, under \cref{cond:hypothesis}, all (positive) solutions to the static Keller--Segel \cref{eq:KS_static} must have mass $8 \pi$.

\begin{theorem}
\label{theorem:critical_mass}
	If $\varrho \in \cC_\KS$ is a solution of the static Keller--Segel \cref{eq:KS_static} and satisfies \cref{cond:hypothesis}, then its mass is necessarily $8 \pi$.
\end{theorem}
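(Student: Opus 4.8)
The plan is to exploit the reduced equation from \Cref{theorem:reduced_KS_static}, which gives $\varrho = C_0 e^{c_{\varphi, \varrho}}$ for a positive constant $C_0$, together with the elliptic equation $\Delta_\varphi c_{\varphi, \varrho} = \varrho$ (equivalently $\Delta_0 c_{\varphi, \varrho} = e^{2 \varphi} \varrho$ in the flat Laplacian). Substituting, we see that $u \eqdef c_{\varphi, \varrho}$ solves a Liouville-type equation $\Delta_0 u = C_0 e^{2 \varphi} e^{u}$ on $\rl^2$, with the asymptotics $u = -\tfrac{m}{2\pi} \ln r + O(1)$ coming from \Cref{lemma:c_bound}, and with $\int_{\rl^2} C_0 e^{2\varphi} e^u \dA_0 = \int_{\rl^2} \varrho \dA_\varphi = m$. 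The sharp decay $\varrho \sim r^{-m/2\pi}$ from \Cref{cor:rho_bounds} is what makes the boundary terms at infinity tractable.

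The key computation is a Pohozaev-type identity. First I would integrate $\Delta_\varphi \varrho = \rd^*(\varrho \, \rd c_{\varphi, \varrho})$ over $B_R(0)$; since $\varphi$ is compactly supported, for $R$ large both the metric and the Laplacian are flat near $\partial B_R$, so integrating by parts gives $\int_{\partial B_R} \partial_r \varrho = \int_{\partial B_R} \varrho \, \partial_r c_{\varphi,\varrho}$. Using $\varrho \sim r^{-m/2\pi}$, $\partial_r \varrho \sim -\tfrac{m}{2\pi} r^{-m/2\pi - 1}$, and $\partial_r c_{\varphi,\varrho} \sim -\tfrac{m}{2\pi} r^{-1}$ (the latter from differentiating the logarithmic asymptotics, which needs a little care but follows from the explicit Green's representation and dominated convergence on the tail), the two sides of this boundary relation are automatically consistent and yield no new information on their own. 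The real input comes from testing the equation against the dilation field: I would multiply the Liouville equation $\Delta_0 u = C_0 e^{2\varphi} e^u$ by $x \cdot \nabla u$ and integrate over $B_R(0)$. The left side integrates by parts (standard Rellich identity in $\rl^2$: $\int_{B_R} (x\cdot\nabla u)\Delta u = \int_{\partial B_R}\big[ (x\cdot\nabla u)\partial_r u - \tfrac{R}{2}|\nabla u|^2\big]$, using that the dimension is $2$ so the bulk term $\tfrac{n-2}{2}\int|\nabla u|^2$ vanishes). The right side is $\int_{B_R} C_0 e^{2\varphi} e^u (x\cdot\nabla u) = \int_{B_R} C_0 x \cdot \nabla\big(e^{2\varphi} e^u\big) - \int_{B_R} 2 C_0 e^{2\varphi} e^u (x \cdot \nabla \varphi)$; integrating the first term by parts produces $-2\int_{B_R} C_0 e^{2\varphi} e^u + \int_{\partial B_R} R\, C_0 e^{2\varphi} e^u$, and since $\varphi \equiv 0$ near $\partial B_R$ the boundary term is $R\int_{\partial B_R}\varrho = O(R \cdot R \cdot R^{-m/2\pi}) = O(R^{2 - m/2\pi})$.

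Now I send $R \to \infty$ along a suitable sequence. The term $-2\int_{B_R} C_0 e^{2\varphi}e^u \to -2m$. Because $\varphi$ is compactly supported, the term $\int_{B_R} 2C_0 e^{2\varphi}e^u (x\cdot\nabla\varphi)$ stabilizes to a fixed finite constant $I_\varphi \eqdef 2\int_{\rl^2} \varrho\,(x\cdot\nabla\varphi)\,\dA_\varphi$, which I would then show equals zero by a separate integration by parts: $\int \varrho (x\cdot\nabla\varphi) e^{2\varphi} = \int \varrho\, x\cdot\nabla(\tfrac12 e^{2\varphi}) = -\tfrac12\int \mathrm{div}(\varrho\, x)\, e^{2\varphi}$, and one checks this vanishes using $\varrho = C_0 e^{c_{\varphi,\varrho}}$ and the support of $\varphi$ — or more cleanly, one observes that in the region where $\varphi \ne 0$ we can trade $x\cdot\nabla\varphi$ against the full equation again; I'd want to double-check this cancellation carefully, as it is the one genuinely delicate point. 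On the boundary side, using the precise asymptotics $u = -\tfrac{m}{2\pi}\ln r + \beta + o(1)$ with $\nabla u = -\tfrac{m}{2\pi}\tfrac{x}{r^2} + o(r^{-1})$, one computes $\int_{\partial B_R}\big[(x\cdot\nabla u)\partial_r u - \tfrac R2 |\nabla u|^2\big] = 2\pi R \big[ R\cdot \tfrac{m^2}{4\pi^2 R^2} - \tfrac R2 \cdot \tfrac{m^2}{4\pi^2 R^2}\big] + o(1) = \tfrac{m^2}{4\pi} + o(1)$, while $O(R^{2-m/2\pi}) \to 0$ provided $m > 4\pi$, which we already know from \Cref{cor:rho_bounds}. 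Equating the limits gives $\tfrac{m^2}{4\pi} = 2m$, hence $m = 8\pi$.

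The main obstacle is justifying the differentiated asymptotics $\nabla c_{\varphi,\varrho} = -\tfrac{m}{2\pi}\tfrac{x}{r^2} + o(r^{-1})$ with enough uniformity to control the boundary integrals — \Cref{lemma:c_bound} only gives the $C^0$ bound $c_{\varphi,\varrho} + \tfrac{m}{2\pi}\ln r = O(1)$. I would obtain the gradient estimate by writing $\nabla c_{\varphi,\varrho}(x) = -\tfrac{1}{2\pi}\int_{\rl^2} \tfrac{x-y}{|x-y|^2}\varrho(y)\,\dA_\varphi(y)$, splitting into $|y| \le |x|/2$, the annulus $|x|/2 \le |y| \le 2|x|$, and $|y| \ge 2|x|$, and using $\varrho \in L^1_+ \cap L^2_{1,\mathrm{loc}}$ together with the sharp decay $\varrho \lesssim r^{-m/2\pi}$ (so $m/2\pi > 2$ makes the far tail summable and the near-diagonal annulus contributes $o(r^{-1})$ by the decay plus a Hölder/Young estimate). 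The secondary obstacle is ensuring the Rellich identity is applied to a function $u$ with only $L^2_{1,\mathrm{loc}}$ a priori regularity; but $\Delta_0 u = e^{2\varphi}\varrho$ with $\varrho \in L^2_{1,\mathrm{loc}}$ and bootstrapping via elliptic regularity (and the fact that $\varrho = C_0 e^u$ is then continuous, hence locally bounded, hence $u \in W^{2,p}_{\mathrm{loc}}$ for all $p$, hence $C^{1,\alpha}_{\mathrm{loc}}$) gives more than enough smoothness for the integration by parts on each $B_R$. Everything else is routine once these asymptotic estimates are in hand.
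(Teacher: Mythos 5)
Your overall frame (reduce to a Liouville-type equation for $u = c_{\varphi,\varrho}$, run a Pohozaev/Rellich identity on $B_R$, control boundary terms with the decay from \Cref{cor:rho_bounds}) is fine up to sign bookkeeping, and your gradient asymptotics for $\rd c_{\varphi,\varrho}$ are justifiable along the lines you sketch. The gap is exactly at the point you flag as "delicate": the curvature term $J \eqdef \int_{\rl^2} \varrho\, (x\cdot\nabla\varphi)\, e^{2\varphi}\dA_0 = \int_{\rl^2}\varrho\, r\del_r\varphi \dA_\varphi$ does \emph{not} vanish for any a priori reason, and your proposed verification is circular. If you carry out the integration by parts you suggest, using $\rd\varrho = \varrho\,\rd c_{\varphi,\varrho}$ and the same symmetrization of the kernel $\tfrac{x-y}{|x-y|^2}$ that evaluates $\int\varrho\,x\cdot\rd c_{\varphi,\varrho}\dA_\varphi = -\tfrac{m^2}{4\pi}$, you get
\begin{equation}
	J \;=\; -m - \tfrac12\int\limits_{\rl^2}\varrho\,\left( x\cdot\rd c_{\varphi,\varrho} \right)\dA_\varphi \;=\; \frac{m^2}{8\pi} - m ,
\end{equation}
so asserting $J=0$ is literally equivalent to the statement $m=8\pi$ you are trying to prove. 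Worse, substituting this value of $J$ back into your Pohozaev identity $\tfrac{m^2}{4\pi} = 2m + 2J$ yields $0=0$: the Pohozaev identity (whose boundary term $\tfrac{m^2}{4\pi}$ is just another way of computing the symmetrized Coulomb term) and your "separate integration by parts" are the \emph{same} identity, so no conclusion about $m$ follows. In the flat case $J=0$ trivially and the argument closes; in the curved case it does not. Note also that $J=0$ for actual solutions is precisely (the planar avatar of) the Kazdan--Warner obstruction \cref{eq:KS_obstruction} exploited in \Cref{theorem:nonexistence}; it is a nontrivial consequence of solvability with $m=8\pi$, not an available input, and for radial monotone $\varphi$ it in fact fails, forcing nonexistence.

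The paper closes this gap with a genuinely additional ingredient, which is the step your proposal is missing: instead of testing with the pure (conformal) dilation field, it tests with $v^f = 2x\,e^{2\varphi} + \rd f$, where $f$ is chosen to solve the auxiliary equation $\Delta_\varphi f + g_\varphi\left( \rd f, \rd c_{\varphi,\varrho} \right) = 4 r\del_r\varphi$ \eqref{eq:aux_PDE}, so that the curvature term is cancelled \emph{pointwise} inside the integral rather than assumed to vanish. Solvability of \cref{eq:aux_PDE} is itself nontrivial: it is obtained by Lax--Milgram in a weighted space $\cH_{\varphi,\varrho}$, with coercivity coming from $\Delta_\varphi c_{\varphi,\varrho} = \varrho > 0$ and boundedness of the right-hand side using the lower bound $\varrho \gtrsim (1+r^2)^{-m/4\pi}$ from \Cref{cor:rho_bounds}. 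With that corrector in place the identity becomes $0 = 4m - \tfrac{m^2}{2\pi}$, giving $m=8\pi$. To repair your proof you would need either this corrector (or some equivalent mechanism) to dispose of $J$; as written, the central step fails.
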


\begin{proof}
	By \Cref{cor:rho_bounds}, we have that $m > 4 \pi$ and thus, for some $\epsilon > 0$, we have $\varrho = O \left( r^{- 2 - \epsilon} \right)$.

	Let now $v = (v_1, v_2)$ be a smooth, compactly supported vector field. Let us pair both sides of \cref{eq:reduced_KS_static} with $- \varrho v$, integrate over $\rl^2$ with respect to $\dA_0$ and then integrate by parts in the first term to get
	\begin{equation}
		\sum\limits_{i = 1}^2 \left( \ \int\limits_{\rl^2} \varrho \left( \del_i v_i + v_i \del_i c_{\varphi, \varrho} \right) \dA_0 \ \right) = 0. \label{eq:virial1}
	\end{equation}
	For any smooth, real function $f$, let
	\begin{equation}
		v^f (x) \eqdef \left( 2 x_1 e^{2 \varphi (x)} + \del_1 f, 2 x_2 e^{2 \varphi (x)} + \del_2 f \right),
	\end{equation}
	and let $\chi_R$ as in the proof of \Cref{theorem:reduced_KS_static}. Let us assume that $\left| \rd f \right| \in L^2 \left( \rl^2, g_\varphi \right)$. Then for $v = \chi_R v^f$ \cref{eq:virial1} becomes
	\begin{equation}
	\begin{aligned}
		0	&= \sum\limits_{i = 1}^2 \left( \ \int\limits_{\rl^2} \varrho \left( \chi_R \del_i v_i^f + \chi_R \varrho v_i^f \del_i c_{\varphi, \varrho} + \del_i \chi_R v_i^f \right) \dA_0 \ \right) \\
			&= \sum\limits_{i = 1}^2 \left( \ \int\limits_{\rl^2} \chi_R (x) \varrho (x) \left( 2 e^{2 \varphi (x)} + 4 x_i \del_i \varphi (x) e^{2 \varphi (x)} + \del_i^2 f (x) + \left( 2 x_i e^{2 \varphi (x)} + \del_i f (x) \right) \del_i c_{\varphi, \varrho} (x) \right) \dA_0 (x) \ \right) \\
			& \quad + O \left( \ \int\limits_{B_{2R} (0) - B_R (0)} |\rd \chi_R| \left| v^f \right| \varrho \dA_\varphi \right) \\
			&= 4 \underbrace{\int\limits_{\rl^2} \chi_R \varrho \dA_\varphi}_{\cI_1 (R)} + 2 \underbrace{\sum\limits_{i = 1}^2 \ \int\limits_{\rl^2} \chi_R (x) \varrho(x) x_i \del_i c_{\varphi, \varrho} (x) \dA_\varphi (x)}_{\cI_2 (R)} + \underbrace{\sum\limits_{i = 1}^2 \left( \ \int\limits_{\rl^2} \chi_R \varrho \left( 4 r \del_r \varphi - \Delta_\varphi f - g_\varphi \left( \rd f, \rd c_{\varphi, \varrho} \right) \right) \dA_0 \ \right)}_{\cI_3 (R)} \\
			& \quad + O \left( R^{- 1} \left( R + \| \rd f \|_{L^2 \left( \rl^2, g_\varphi \right)} \right) R^{- 2 - \epsilon} R^2 \right). \label{eq:virial2}
	\end{aligned}
	\end{equation}
	As $R \rightarrow \infty$ the last term goes to zero, by definition, $\cI_1 (R) \rightarrow m$. Using \cref{eq:c_def}, we get
	\begin{align}
		\cI_2 (R)	&= \sum\limits_{i = 1}^2 \ \int\limits_{\rl^2} \chi_R (x) x_i \del_i c_{\varphi, \varrho} (x) \dA_\varphi (x) \\
					&= - \frac{1}{2 \pi} \sum\limits_{i = 1}^2 \ \iint\limits_{\rl^2 \times \rl^2} \chi_R (x) \varrho (x) x_i \del_i \ln \left( |x - y| \right) \varrho (y) \dA_\varphi (y) \dA_\varphi (x) \\
					&= - \frac{1}{2 \pi} \sum\limits_{i = 1}^2 \ \iint\limits_{\rl^2 \times \rl^2} \chi_R (x) \varrho (x) x_i \frac{x_i - y_i}{|x - y|^2} \varrho (y) \dA_\varphi (y) \dA_\varphi (x),
	\end{align}
	thus
	\begin{align}
		\lim\limits_{R \rightarrow \infty} \cI_2 (R)	&= - \frac{1}{2 \pi} \sum\limits_{i = 1}^2 \ \iint\limits_{\rl^2 \times \rl^2} \varrho (x) x_i \frac{x_i - y_i}{|x - y|^2} \varrho (y) \dA_\varphi (y) \dA_\varphi (x) \\
														&= - \frac{1}{2 \pi} \sum\limits_{i = 1}^2 \ \iint\limits_{\rl^2 \times \rl^2} \varrho (x) \left( \frac{x_i - y_i}{2} + \frac{x_i + y_i}{2} \right) \frac{x_i - y_i}{|x - y|^2} \varrho (y) \dA_\varphi (y) \dA_\varphi (x) \\
														&= - \frac{1}{4 \pi} \iint\limits_{\rl^2 \times \rl^2} \varrho (x) \varrho (y) \dA_\varphi (y) \dA_\varphi (x) + \underbrace{0}_{\mbox{due to antisymmetry}} \\
														&= - \frac{m^2}{4 \pi}.
	\end{align}
	Finally, if we can choose a smooth $f$ so that
	\begin{equation}
		\Delta_\varphi f + g_\varphi \left( \rd f, \rd c_{\varphi, \varrho} \right) = 4 r \del_r \varphi, \label{eq:aux_PDE}
	\end{equation}
	and $|\rd f| \in L^2 \left( \rl^2, g_\varphi \right)$, then $\cI_3 (R) = 0$, for all $R$. For any smooth, compactly supported function $\phi$, let
	\begin{equation}
		\| \phi \|_{\varphi, \varrho} \eqdef \sqrt{ \| \rd \phi \|_{L^2 \left( \rl^2, g_\varphi \right)}^2 + \tfrac{1}{2} \| \sqrt{\varrho} \phi \|_{L^2 \left( \rl^2, g_\varphi \right)}^2},
	\end{equation}
	and let $\left( \cH_{\varphi, \varrho}, \langle - | - \rangle_{\varphi, \varrho} \right)$ the corresponding Hilbert space. Clearly $\cH_{\varphi, \varrho} \subseteq L_{1, \loc}^2$. The weak formulation of \cref{eq:aux_PDE} on $\cH_{\varphi, \varrho}$ is
	\begin{equation}
		\forall \phi \in C_{\cpt}^\infty \left( \rl^2 \right) : \quad \underbrace{\langle \rd \phi | \rd f \rangle_{L^2 \left( \rl^2, g_\varphi \right)} + \int\limits_{\rl^2} \phi g_\varphi \left( \rd f, \rd c_{\varphi, \varrho} \right) \dA_\varphi}_{B \left( f, \phi \right)} = \underbrace{\int\limits_{\rl^2} \phi r \del_r \varphi \dA_\varphi}_{\Phi_\varphi \left( \phi \right)}.
	\end{equation}
	Now if $f = \phi \in C_{\cpt}^\infty \left( \rl^2 \right)$, then
	\begin{align}
		 B \left( \phi, \phi \right)	&= \langle \rd \phi | \rd \phi \rangle_{L^2 \left( \rl^2, g_\varphi \right)} + \int\limits_{\rl^2} \phi g_\varphi \left( \rd \phi, \rd c_{\varphi, \varrho} \right) \dA_\varphi \\
		 								&= \| \rd \phi \|_{L^2 \left( \rl^2, g_\varphi \right)}^2 + \frac{1}{2} \int\limits_{\rl^2} g_\varphi \left( \rd \phi^2, \rd c_{\varphi, \varrho} \right) \dA_\varphi \\
		 								&= \| \rd \phi \|_{L^2 \left( \rl^2, g_\varphi \right)}^2 + \frac{1}{2} \int\limits_{\rl^2} \phi^2 \Delta_\varphi c_{\varphi, \varrho} \dA_\varphi \\
		 								&= \| \rd \phi \|_{L^2 \left( \rl^2, g_\varphi \right)}^2 + \frac{1}{2} \int\limits_{\rl^2} \phi^2 \varrho \dA_\varphi \\
		 								&= \| \phi \|_{\varphi, \varrho}^2,
	\end{align}
	and, using that $\varphi$ has compact support and \cref{ineq:rho_bounds}, we have
	\begin{align}
		\left| \Phi_\varphi \left( \phi \right) \right|	&= \int\limits_{\rl^2} \phi r \del_r \varphi \dA_\varphi \\
														&= \int\limits_{\rl^2} \left( \phi \sqrt{\varrho} \right) \left( \frac{r \del_r \varphi}{\sqrt{\varrho}} \right) \dA_\varphi \\
														&\leqslant \| \phi \sqrt{\varrho} \|_{L^2 \left( \rl^2, g_\varphi \right)} \sqrt{\int\limits_{\rl^2} \tfrac{r^2 \left( \del_r \varphi \right)^2}{\varrho} \dA_\varphi} \\
														&\leqslant K (\varphi, \varrho) \| \phi \|_{\varphi, \varrho}.
	\end{align}
	Thus the conditions of the Lax--Milgram theorem are satisfied and hence there is a unique $f \in \cH_{\varphi, \varrho}$ that solves \cref{eq:aux_PDE}. By elliptic regularity, $f$ is in fact smooth and by the definition $\cH_{\varphi, \varrho}$, $|\rd f| \in L^2 \left( \rl^2, g_\varphi \right)$. Hence \cref{eq:virial2} becomes $0 = 4 m - \tfrac{m^2}{2 \pi}$, which concludes the proof.
\end{proof}

\bigskip

\section{Connection to the critical Kazdan--Warner equation on the round sphere}
\label{sec:KW}

Let us assume that $\varrho \in \cC_\KS$ is a solution of the static Keller--Segel \cref{eq:KS_static} and satisfies \cref{cond:hypothesis}, and thus $m = 8 \pi$. Fix $\lambda \in \rl_+$ and $x_\star \in \rl^2$, and let $\varrho_{\lambda, x_\star}$ as in \cref{eq:flat_solution}. Pick the unique stereographic projection $p_{\lambda, x_\star} : \S^2 - \{ \mbox{ North pole } \} \rightarrow \rl^2$, so that $g_{\S^2} \eqdef \left( p_{\lambda, x_\star} \right)^* \left( \tfrac{1}{2} \varrho_{\lambda, x_\star} g_0 \right)$ is the round metric of unit radius. By \Cref{cor:rho_bounds}, the function $\widetilde{u} \eqdef \tfrac{1}{2} \ln \left( \tfrac{\varrho}{\varrho_{\lambda, x_\star}} \right)$ is bounded on $\rl^2$. Let $u \eqdef \widetilde{u} \circ p_{\lambda, x_\star} \in L^\infty \left( \S^2 \right)$. Then (omitting obvious pullbacks and computations) we have
\begin{align}
	\Delta_{\S^2} u	&= \frac{1}{\tfrac{1}{2} \varrho_{\lambda, x_\star}} \Delta_0 \left( \frac{1}{2} \ln \left( \frac{\varrho}{\varrho_{\lambda, x_\star}} \right) \right) \\
					&= \frac{1}{\varrho_{\lambda, x_\star}} \Delta_0 \left( \left( \ln \left( \varrho \right) - c_{\varphi, \varrho} \right) + c_{\varphi, \varrho} + \ln \left( \varrho_{\lambda, x_\star} \right) \right) \\
					&= \frac{1}{\varrho_{\lambda, x_\star}} \left( 0 + e^{2 \varphi} \varrho - \varrho_{\lambda, x_\star} \right) \\
					&= e^{2 \varphi} e^{2 u} - 1.
\end{align}
Since $\varphi$ is compactly supported, the pullback of $e^{2 \varphi}$ to $\S^2$ via $p_{\lambda, x_\star}$ extends smoothly over the North pole. Let us denote this extension by $h$. Then the equation on $u$ becomes
\begin{equation}
	\Delta_{\S^2} u = h e^{2 u} - 1. \label{eq:KW}
\end{equation}
This is the equation of Kazdan and Warner, \cite{kazdan_curvature_1974}*{Equation~(1.3)}, with $k = 1$ (note that they use the opposite sign convention for the Laplacian). When $\varphi$ vanishes identically, then $u = 0$ is a solution, which corresponds to the well-known $\varrho = \varrho_{\lambda, x_\star}$ solution on the flat plane. More generally, given any $\lambda \in \rl_+$ and $x_\star \in \rl^2$ and any positive scalar curvature metric $g$ on $\S^2$, one can construct a solutions to curved, static Keller-Segel \cref{eq:KS_static} as follows: by the uniformization theorem, $g$ and $g_{\S^2}$ are always conformally equivalent. Thus we have a function, $u$, that solves \cref{eq:KW} with $h$ being the scalar curvature of $g$ (pulled back under a diffeomorphism). Let now $\widetilde{u}$ and $\widetilde{h}$ be the pushforwards of $u$ and $h$, respectively, to $\rl^2$ via $p_{\lambda, x_\star}$, and let $\varrho \eqdef \varrho_{\lambda, x_\star} e^{2 \widetilde{u}}$. Then $\varrho$ solves the curved, static Keller-Segel \cref{eq:KS_static} with $\varphi = \tfrac{1}{2} \ln \left( \widetilde{h} \right)$.

\smallskip

\begin{remark}
	Using the reduced, static Keller--Segel \cref{eq:reduced_KS_static} also, equations similar to the Kazdan--Warner \cref{eq:KW} were studied in \cites{BCN17,WWY19}. These equations however are still on the plane so the geometric interpretation above is lost.
\end{remark}

\smallskip

Unfortunately, \cref{eq:KW} is the critical version of the Kazdan--Warner equation in \cite{kazdan_curvature_1974}. Thus we cannot, in general, assume solvability for an arbitrary $h$. In fact, Kazdan and Warner found a necessary conditions for the existence of solutions: For each spherical harmonic of degree one, $u_1$, by \cite{kazdan_curvature_1974}*{Equation~(8.10)}, we have
\begin{equation}
	\int\limits_{\S^2} g_{\S^2} \left( \rd u_1, \rd h \right) e^{2 u} \omega_{\S^2} = 0, \label{eq:KS_obstruction}
\end{equation}
where $\omega_{\S^2}$ is the symplectic/area form of $g_{\S^2}$. We use \cref{eq:KS_obstruction} to prove the following:

\begin{theorem}
	\label{theorem:nonexistence}
	There exists $\varphi \in C_\cpt^\infty \left( \rl^2 \right)$, arbitrarily close to the identically zero function, such that the static Keller--Segel \cref{eq:KS_static} has no solutions satisfying \cref{cond:hypothesis}.
\end{theorem}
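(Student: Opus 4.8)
The plan is to make the Kazdan--Warner obstruction \cref{eq:KS_obstruction} fail. Concretely, I will produce a small $\varphi \in C_\cpt^\infty \left( \rl^2 \right)$ for which the integrand in \cref{eq:KS_obstruction} is, for one conveniently chosen degree-one spherical harmonic $u_1$, nonnegative everywhere and strictly positive on a set of positive measure; since $e^{2 u} > 0$ and $\omega_{\S^2} > 0$, the left-hand side of \cref{eq:KS_obstruction} is then strictly positive, so \cref{eq:KW} admits no (bounded) solution $u$. By \Cref{theorem:critical_mass} and the correspondence of \Cref{sec:KW}, this rules out every $\varrho \in \cC_\KS$ that solves \cref{eq:KS_static} and satisfies \cref{cond:hypothesis} for such a $\varphi$.

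First I would normalize the construction of \Cref{sec:KW} by taking $x_\star = 0$ and $\lambda = 1$, so that $p_{1, 0}$ carries $g_{\S^2}$ to $\tfrac{4}{\left( 1 + r^2 \right)^2} g_0$, sends the origin to the south pole, and the axial degree-one spherical harmonic $\xi_3$ of $\S^2 \subset \rl^3$ pushes forward to the radial function $\tfrac{r^2 - 1}{r^2 + 1}$ on $\rl^2$. Taking $u_1 = \xi_3$ and pushing the integrand of \cref{eq:KS_obstruction} down to $\rl^2$ --- using $\nabla \! \left( \tfrac{r^2 - 1}{r^2 + 1} \right) = \tfrac{4 x}{\left( 1 + r^2 \right)^2}$, the correspondences $h \leftrightarrow e^{2 \varphi}$, $e^{2 u} \leftrightarrow \tfrac{\left( 1 + r^2 \right)^2}{8} \varrho$ (from $\widetilde{u} = \tfrac12 \ln \left( \varrho / \varrho_{1, 0} \right)$ and $\varrho_{1, 0} = \tfrac{8}{\left( 1 + r^2 \right)^2}$), and $\omega_{\S^2} \leftrightarrow \tfrac{4}{\left( 1 + r^2 \right)^2} \dA_0$ --- all the conformal factors cancel and \cref{eq:KS_obstruction} collapses to the identity
\[
	\int\limits_{\rl^2} \varrho \, r \del_r \varphi \, \dA_\varphi = 0 .
\]
Up to a constant, this is precisely the term $4 r \del_r \varphi$ that appeared in $\cI_3$ in the proof of \Cref{theorem:critical_mass} and had to be removed there via the auxiliary equation \cref{eq:aux_PDE}. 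I expect this translation step to be the main obstacle: once the abstract obstruction is recast as a sign condition on $r \del_r \varphi$ against the positive density $\varrho \, \dA_\varphi$, the rest amounts to choosing $\varphi$.

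Accordingly, I would next take $\varphi = \varphi_\epsilon \eqdef - \epsilon \, b \left( r^2 \right)$, where $\epsilon > 0$ and $b \in C^\infty \left( [0, \infty) \right)$ is nonincreasing with $b (0) > 0$ and $b \equiv 0$ on $[1, \infty)$ (for instance $b (t) = e^{- 1 / (1 - t)}$ on $[0, 1)$). Then $\varphi_\epsilon \in C_\cpt^\infty \left( \rl^2 \right)$ is radial, supported in $\overline{B_1 (0)}$, and $\| \varphi_\epsilon \|_{C^k} \to 0$ as $\epsilon \to 0$ for every $k$, so $\varphi_\epsilon$ is as close to the zero function as desired; moreover $r \del_r \varphi_\epsilon = - 2 \epsilon \, r^2 b' \left( r^2 \right) \geqslant 0$ on all of $\rl^2$, with strict inequality wherever $0 < r < 1$. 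Finally, suppose for contradiction that some $\varrho \in \cC_\KS \left( m, \varphi_\epsilon \right)$ solves \cref{eq:KS_static} and satisfies \cref{cond:hypothesis}. By \Cref{theorem:critical_mass}, $m = 8 \pi$, so the construction of \Cref{sec:KW} produces a bounded $u$ solving \cref{eq:KW} with $h$ smooth; by elliptic regularity $u$ is smooth, hence \cref{eq:KS_obstruction} applies and, by the computation above, forces $\int_{\rl^2} \varrho \, r \del_r \varphi_\epsilon \, \dA_{\varphi_\epsilon} = 0$. But $\varrho > 0$ almost everywhere and $r \del_r \varphi_\epsilon$ is nonnegative and strictly positive on the set $\{ 0 < r < 1 \}$ of positive measure, so the integral is strictly positive --- a contradiction. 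Thus no such $\varrho$ exists for any $\epsilon > 0$, which is the assertion of the theorem.
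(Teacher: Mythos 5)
Your proposal is correct and follows essentially the same route as the paper: the paper also applies the Kazdan--Warner obstruction \cref{eq:KS_obstruction} with the axial harmonic $u_1 = \sin(\theta)$ to a radial, nonconstant $\varphi$ whose radial derivative has a fixed sign, noting $\del_\theta h \sim e^{2 \varphi} \del_r \varphi$. Your only additions are cosmetic: you push the obstruction down to $\rl^2$ explicitly (obtaining $\int_{\rl^2} \varrho \, r \del_r \varphi \, \dA_\varphi = 0$) and exhibit a concrete family $\varphi_\epsilon$, both of which are consistent with the paper's argument.
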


\begin{proof}
	Let us assume that $\varphi$ is radial (with respect to $x_\star$). Then $h$ is only a function of the polar angle $\theta \in \left( - \tfrac{\pi}{2}, \tfrac{\pi}{2} \right)$, on $\S^2$. When $u_1 = \sin \left( \theta \right)$, then \cref{eq:KS_obstruction} becomes
	\begin{equation}
		\int\limits_{\S^2} \cos \left( \theta \right) \left( \del_\theta h \right) e^{2 u} \omega_{\S^2} = 0. \label{eq:KW_obstraction_radial}
	\end{equation}
	Since $\del_\theta h \sim e^{2 \varphi} \del_r \varphi$, we get that if $\varphi$ is nonconstant and $\del_r \varphi$ is either nonnegative or nonpositive, then \cref{eq:KW_obstraction_radial} cannot hold. This concludes the proof.
\end{proof}

\bigskip

\section{The variation aspects of the Keller--Segel theory on curved planes}

We end this paper with a complementary result to \Cref{theorem:critical_mass}, showing that the energy functional (formally) corresponding to the Keller--Segel flow in \cref{eq:KS1,eq:KS2} is bounded from below only when $m = 8 \pi$. In order to do that, we first prove a curved version of the logarithmic Hardy--Littlewood--Sobolev inequality.

\subsection{Curved logarithmic Hardy--Littlewood--Sobolev inequality and the Keller--Segel free energy}
\label{sec:curved_log-HLS}

Let $\lambda \in \rl_+$ and $x_\star \in \rl^2$, and define
\begin{equation}
	\mu_{\lambda, x_\star} (x) \eqdef \frac{\lambda^2}{\pi \left( \lambda^2 + |x - x_\star|^2 \right)^2}. \label{eq:mu_def}
\end{equation}
Then $\mu_{\lambda, x_\star}$ is everywhere positive, $\int_{\rl^2} \mu_{\lambda, x_\star} \dA_0 = 1$, and for any $f \in C_{\cpt}^\infty \left( \rl^2 \right)$
\begin{equation}
	\lim\limits_{\lambda \rightarrow 0} \ \int\limits_{\rl^2} \mu_{\lambda, x_\star} f \dA_0 = f (x_\star). \label{eq:dirac}
\end{equation}
The following identities about $\mu_{\lambda, x_\star}$ are easy to verify:
\begin{subequations}
\begin{align}
	\int\limits_{\rl^2} m \mu_{\lambda, x_\star} \ln \left( m \mu_{\lambda, x_\star} \right) \dA_0							&= m \ln \left( \frac{m}{\pi e} \right) - 2 m \ln (\lambda), \label{eq:mu_entropy} \\
	\int\limits_{\rl^2} G (\cdot, y) \mu_{\lambda, x_\star} (y) \dA_0 (y)													&= \frac{1}{8 \pi} \left( \ln \left( \mu_{\lambda, x_\star} \right) - 2 \ln (\lambda) + \ln (\pi) \right), \label{eq:G_mu} \\
	\iint\limits_{\rl^2 \times \rl^2} \mu_{\lambda, x_\star} (x) G (x, y) \mu_{\lambda, x_\star} (y) \dA_0 (x) \dA_0 (y)	&= - \frac{1}{2 \pi} \ln (\lambda) - \frac{1}{4 \pi}. \label{eq:mu_Coulomb_energy}
\end{align}
\end{subequations}
Now we can state the \emph{logarithmic Hardy--Littlewood--Sobolev inequality} on $\left( \rl^2, g_0 \right)$, which is a special case of \cite{beckner_sharp_1993}*{Theorem~2}.

\begin{theorem}
	\label{theorem:classical_log-HLS}
	Let $\varrho$ be an almost everywhere positive function on $\rl^2$ and assume that
	\begin{equation}
		\int\limits_{\rl^2} \varrho \dA_0 = m \in \rl_+.
	\end{equation}
	Then for all $\lambda \in \rl_+$, $x_\star \in \rl^2$, we have
	\begin{equation}
		\int\limits_{\rl^2} \varrho (x) \ln \left( \frac{\varrho (x)}{m \mu_{\lambda, x_\star} (x)} \right) \dA_0 \geqslant \frac{4 \pi}{m} \iint\limits_{\rl^2 \times \rl^2} (\varrho (x) - m \mu_{\lambda, x_\star} (x)) G (x, y) (\varrho (y) - m \mu_{\lambda, x_\star} (y)) \dA_0 (x) \dA_0 (y). \label[ineq]{ineq:classical_log-HLS}
	\end{equation}
	Moreover, equality holds exactly when $\varrho = m \mu_{\lambda, x_\star}$.
\end{theorem}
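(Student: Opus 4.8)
The plan is to show that \eqref{ineq:classical_log-HLS} is, after an elementary rearrangement, exactly equivalent to the classical sharp logarithmic Hardy--Littlewood--Sobolev inequality on $\rl^2$ for probability densities, and then to quote the known optimal constant and the classification of optimizers.

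First I would reduce to unit mass: both sides of \eqref{ineq:classical_log-HLS} are homogeneous of degree $1$ in $\varrho$ (with $m$ scaling accordingly), so it suffices to treat $f \eqdef \varrho / m$, which satisfies $\int_{\rl^2} f\,\dA_0 = 1$. Then I would expand both sides. On the left, $\ln(f / \mu_{\lambda,x_\star}) = \ln(f) - \ln(\mu_{\lambda,x_\star})$, and \eqref{eq:G_mu} expresses $\ln(\mu_{\lambda,x_\star})$ as $8\pi$ times the logarithmic potential $\int_{\rl^2} G(\cdot, y)\mu_{\lambda,x_\star}(y)\dA_0(y)$ up to an explicit additive constant; since $\int f\,\dA_0 = 1$ this rewrites $\int f\ln(\mu_{\lambda,x_\star})\dA_0$ as $8\pi\iint f(x)G(x,y)\mu_{\lambda,x_\star}(y)\dA_0(x)\dA_0(y)$ plus a constant. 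On the right, I would expand the bilinear form into the self-energy $\iint f G f$, the cross term $\iint f G \mu_{\lambda,x_\star}$, and the self-energy $\iint \mu_{\lambda,x_\star} G \mu_{\lambda,x_\star}$, evaluating the last by \eqref{eq:mu_Coulomb_energy}. The key bookkeeping point is that the cross terms $\iint f G \mu_{\lambda,x_\star}$ produced on the two sides cancel exactly, as do all $\ln(\lambda)$-dependent constants; using $G = -\tfrac{1}{2\pi}\ln|\cdot|$, one is left precisely with
\[
	\int_{\rl^2} f\ln(f)\,\dA_0 + 2\iint_{\rl^2 \times \rl^2} f(x) f(y)\ln|x - y|\,\dA_0(x)\dA_0(y) \geqslant -\bigl(1 + \ln(\pi)\bigr).
\]
Conceptually, this step is nothing but the stereographic transplant of \Cref{sec:KW}: under $p_{\lambda,x_\star}$ the measure $\mu_{\lambda,x_\star}\dA_0$ (recall $\mu_{\lambda,x_\star} = \tfrac{1}{8\pi}\varrho_{\lambda,x_\star}$) becomes the normalized round area form of $\S^2$, so \eqref{ineq:classical_log-HLS} is the pullback to $\rl^2$ of the logarithmic Hardy--Littlewood--Sobolev inequality on the round sphere, which is why the reference density appears.

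The displayed inequality is the classical sharp logarithmic Hardy--Littlewood--Sobolev inequality on $\rl^2$ for probability densities -- the unit-mass, two-dimensional case of \cite{beckner_sharp_1993}*{Theorem~2} (originally due to Carlen--Loss) -- whose optimal constant is $-(1 + \ln(\pi))$ and for which equality holds precisely for the conformal densities $f = \mu_{\sigma, z}$ with $\sigma \in \rl_+$ and $z \in \rl^2$. Undoing the reduction yields \eqref{ineq:classical_log-HLS}, with equality exactly when $\varrho = m\mu_{\sigma, z}$ for some $\sigma, z$; note that the rearrangement above shows the difference of the two sides of \eqref{ineq:classical_log-HLS} to be independent of the reference pair $(\lambda, x_\star)$, so that saturation for one choice of $(\lambda, x_\star)$ forces it for all, and in particular each $m\mu_{\lambda,x_\star}$ is an optimizer. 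The one subtlety deserving care is that no integrability beyond $\varrho \in L_+^1$ is assumed: the left-hand side of \eqref{ineq:classical_log-HLS} is the relative entropy of $f$ against the probability density $\mu_{\lambda,x_\star}$, nonnegative (and possibly $+\infty$) by Jensen's inequality, and the right-hand side is the Coulomb energy of the mean-zero density $\varrho - m\mu_{\lambda,x_\star}$, likewise nonnegative and possibly $+\infty$; the algebraic identities used above persist in $(-\infty, +\infty]$, so the statement is nonvacuous and no extra hypotheses are needed. The main ``obstacle'' is therefore not analytic at all -- it is merely carrying out the cancellation in the expansion without error -- since all of the substantive content, namely the value of the constant and the rigidity, is imported wholesale from \cite{beckner_sharp_1993}.
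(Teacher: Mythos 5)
Your proposal is correct and follows essentially the same route as the paper's own (sketched) proof: rewrite \cref{ineq:classical_log-HLS} using \cref{eq:G_mu,eq:mu_Coulomb_energy}, observe that the cross terms and the $\ln (\lambda)$-dependent constants cancel so that the deficit reduces to the sharp logarithmic Hardy--Littlewood--Sobolev functional of \cref{ineq:classical_log-HLS_alternative}, and import the inequality, the sharp constant, and the optimizers from \cite{beckner_sharp_1993} (Carlen--Loss). The one point where you and the paper diverge is the equality clause, and there your version is the one that actually follows from the argument: since the difference of the two sides of \cref{ineq:classical_log-HLS} is independent of $(\lambda, x_\star)$, equality holds precisely when $\varrho = m \mu_{\sigma, z}$ for \emph{some} $\sigma \in \rl_+$, $z \in \rl^2$ --- indeed, for $\varrho = m \mu_{\sigma, z}$ with $(\sigma, z) \neq (\lambda, x_\star)$ both sides of \cref{ineq:classical_log-HLS} are strictly positive but equal --- so the theorem's literal claim that equality forces $\varrho = m \mu_{\lambda, x_\star}$ is too restrictive, and your characterization should be read as a small correction rather than a gap in your proof.
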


\begin{proof}[Idea of the proof:]
	Note that \cref{eq:mu_entropy,eq:mu_Coulomb_energy,eq:mu_entropy} imply that \cref{ineq:classical_log-HLS} is equivalent to
	\begin{equation}
		\int\limits_{\rl^2} \varrho (x) \ln \left( \varrho (x) \right) \dA_0 + \frac{2}{m} \iint\limits_{\rl^2 \times \rl^2} \varrho (x) \ln \left( |x - y| \right) \varrho (y) \dA_0 (x) \dA_0 (y) + m \left( 1 + \ln (\pi) - \ln (m) \right) \geqslant 0. \label[ineq]{ineq:classical_log-HLS_alternative}
	\end{equation}
	Now \cref{ineq:classical_log-HLS_alternative} is the $n = 2$ and $f = g$ case of \cite{beckner_sharp_1993}*{inequality (27)}.
\end{proof}

\smallskip

Let now $g$ be \emph{any} smooth Riemannian metric on $\rl^2$, not necessarily conformally equivalent to $g_0$. There still exists a smooth function, $\varphi$, such that if the area form of $g$ is $\dA_g$, then
\begin{equation}
	\dA_g = e^{2 \varphi} \dA_0. \label{eq:area_form_change}
\end{equation}
For the remainder of this section (but this section only), let $\varphi$ be defined via \cref{eq:area_form_change}, and write, as before $\dA_\varphi \eqdef \dA_g$. When $g$ is not conformally equivalent to $g_0$, then $G$ is no longer the Green's function for $g$. Now let $\mu_{\lambda, x_\star}^\varphi \eqdef \mu_{\lambda, x_\star} e^{- 2 \varphi}$. Note that $\int_{\rl^2} \mu_{\lambda, x_\star}^\varphi \dA_\varphi = 1$.

The next lemma is a generalization of \Cref{theorem:classical_log-HLS}.

\begin{lemma}
	\label{lem:curved_log-HLS}
	Let $\varrho$ be an almost everywhere positive function on $\rl^2$ and assume that
	\begin{equation}
		\int\limits_{\rl^2} \varrho \dA_\varphi = m \in \rl_+.
	\end{equation}
	Then for all $\lambda \in \rl_+$ and $x_\star \in \rl^2$, we have
	\begin{equation}
		\int\limits_{\rl^2} \varrho \ln \left( \frac{\varrho}{m \mu_{\lambda, x_\star}^\varphi} \right) \dA_\varphi \geqslant \frac{4 \pi}{m} \iint\limits_{\rl^2 \times \rl^2} \left( \varrho (x) - m \mu_{\lambda, x_\star}^\varphi (x) \right) G (x, y) \left( \varrho (y) - m \mu_{\lambda, x_\star}^\varphi (y) \right) \dA_\varphi (x) \dA_\varphi (y), \label[ineq]{ineq:curved_log-HLS}
	\end{equation}
	and equality holds exactly when $\varrho = m \mu_{\lambda, x_\star}^\varphi$.
\end{lemma}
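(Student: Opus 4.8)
The plan is to deduce \Cref{lem:curved_log-HLS} from the flat \Cref{theorem:classical_log-HLS} by absorbing the conformal weight into the density. Set $\widetilde{\varrho} \eqdef e^{2 \varphi} \varrho$. Then $\widetilde{\varrho}$ is almost everywhere positive and
\[
	\int_{\rl^2} \widetilde{\varrho} \dA_0 = \int_{\rl^2} \varrho e^{2 \varphi} \dA_0 = \int_{\rl^2} \varrho \dA_\varphi = m ,
\]
so \Cref{theorem:classical_log-HLS} applies to $\widetilde{\varrho}$ and yields \cref{ineq:classical_log-HLS} with $\varrho$ replaced by $\widetilde{\varrho}$ (and the same $\lambda$, $x_\star$).

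Next I would rewrite that inequality in terms of $\varrho$, using the identity $\mu_{\lambda, x_\star} = e^{2 \varphi} \mu_{\lambda, x_\star}^\varphi$ that follows from the definition of $\mu_{\lambda, x_\star}^\varphi$. On the left-hand side, $\tfrac{\widetilde{\varrho}}{m \mu_{\lambda, x_\star}} = \tfrac{\varrho}{m \mu_{\lambda, x_\star}^\varphi}$ pointwise and $\widetilde{\varrho} \dA_0 = \varrho \dA_\varphi$, so the entropy term equals $\int_{\rl^2} \varrho \ln \bigl( \tfrac{\varrho}{m \mu_{\lambda, x_\star}^\varphi} \bigr) \dA_\varphi$. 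On the right-hand side, $\widetilde{\varrho} - m \mu_{\lambda, x_\star} = e^{2 \varphi} \left( \varrho - m \mu_{\lambda, x_\star}^\varphi \right)$, while $\dA_0 = e^{- 2 \varphi} \dA_\varphi$; hence in each of the two integration variables the factor $e^{2 \varphi}$ cancels against the change of measure, and the double integral over $\rl^2 \times \rl^2$ (whose kernel $G$ is the same fixed kernel in both statements) turns into exactly the right-hand side of \cref{ineq:curved_log-HLS}. This proves the inequality, and the equality case transfers for free: equality holds iff $\widetilde{\varrho} = m \mu_{\lambda, x_\star}$, that is, iff $\varrho = m \mu_{\lambda, x_\star}^\varphi$.

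I do not expect a genuine analytic obstacle here: every step is a pointwise identity in the conformal weight $e^{2 \varphi}$ together with the corresponding change of measure, so the manipulations are valid whenever the terms are defined, and the well-definedness (and the precise value of each side) is simply inherited from the flat statement. The only point worth a sentence is that the relative-entropy integral on the left may a priori be $+ \infty$, in which case \cref{ineq:curved_log-HLS} holds trivially; otherwise the reduction above is literal.
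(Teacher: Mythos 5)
Your proposal is correct and is essentially identical to the paper's own proof: both substitute $\varrho e^{2\varphi}$ into the flat \Cref{theorem:classical_log-HLS}, use $\mu_{\lambda, x_\star}^\varphi = \mu_{\lambda, x_\star} e^{-2\varphi}$ to rewrite the entropy term, and cancel the conformal factor against the change of measure in the double integral, with the equality case transferring directly.
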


\begin{proof}
	Let us first rewrite the left-hand side of \cref{ineq:curved_log-HLS}:
	\begin{equation}
		\int\limits_{\rl^2} \varrho \ln \left( \frac{\varrho}{m \mu_{\lambda, x_\star}^\varphi} \right) \dA_\varphi = \int\limits_{\rl^2} \varrho \ln \left( \frac{\varrho}{m \mu e^{- 2 \varphi}} \right) e^{2 \varphi} \dA_0 = \int\limits_{\rl^2} \left( \varrho e^{2 \varphi} \right) \ln \left( \frac{\left( \varrho e^{2 \varphi} \right)}{m \mu} \right) \dA_0. \label[ineq]{ineq:first_ineq}
	\end{equation}
	Since $\varrho e^{2 \varphi}$ is almost everywhere positive and
	\begin{equation}
		\int\limits_{\rl^2} \left( \varrho e^{2 \varphi} \right) \dA_0 = \int\limits_{\rl^2} \varrho \dA_\varphi = m,
	\end{equation}
	we can use \cref{ineq:classical_log-HLS}, with $\varrho$ replaced by $\varrho e^{2 \varphi}$, and get
	\begin{equation}
		\int\limits_{\rl^2} \left( \varrho e^{2 \varphi} \right) \ln \left( \frac{\left( \varrho e^{2 \varphi} \right)}{m \mu} \right) \dA_0 \geqslant \frac{4 \pi}{m} \iint\limits_{\rl^2 \times \rl^2} \left( \varrho (x) e^{2 \varphi (x)} - m \mu (x) \right) G (x, y) \left( \varrho (y) e^{2 \varphi (y)} - m \mu (y) \right) \dA_0 (x) \dA_0 (y). \label[ineq]{ineq:intermediate_ineq}
	\end{equation}
	Furthermore
	\begin{multline}
		\iint\limits_{\rl^2 \times \rl^2} \left( \varrho (x) e^{2 \varphi (x)} - m \mu (x) \right) G (x, y) \left( \varrho (y) e^{2 \varphi (y)} - m \mu (y) \right) \dA_0 (x) \dA_0 (y) \\
		= \iint\limits_{\rl^2 \times \rl^2} \left( \varrho (x) - m \mu (x) e^{- 2 \varphi (x)} \right) G (x, y) \left( \varrho (y) - m \mu (y) e^{- 2 \varphi (y)} \right) \left( e^{2 \varphi (x)} \dA_0 (x) \right) \left( e^{2 \varphi (y)} \dA_0 (y) \right) \\
		= \iint\limits_{\rl^2 \times \rl^2} \left( \varrho (x) - m \mu_{\lambda, x_\star}^\varphi (x) \right) G (x, y) \left( \varrho (y) - m \mu_{\lambda, x_\star}^\varphi (y) \right) \dA_\varphi (x) \dA_\varphi (y). \label{eq:last_eq}
	\end{multline}
	Combining \cref{ineq:first_ineq,ineq:intermediate_ineq,eq:last_eq} proves \cref{ineq:curved_log-HLS}. Finally, equality in \cref{ineq:intermediate_ineq} holds exactly when $\varrho e^{2 \varphi} = m \mu$, or equivalently, when $\varrho = m \mu_{\lambda, x_\star}^\varphi$, which conclude the proof.
\end{proof}

\smallskip

\begin{remark}
	As opposed to the flat case, when $\varphi$ is not identically zero, the $m = 8 \pi$ minimizer for the curved logarithmic Hardy--Littlewood--Sobolev \cref{ineq:curved_log-HLS}, $8 \pi \mu_{\lambda, x_\star}^\varphi$, is \emph{not} a solution to the static Keller--Segel \cref{eq:KS_static}, nor the reduced, static Keller--Segel \cref{eq:reduced_KS_static}. Instead, we get
	\begin{equation}
		\rd \left( \ln \left( 8 \pi \mu_{\lambda, x_\star}^\varphi \right) - c_{\varphi, 8 \pi \mu_{\lambda, x_\star}^\varphi} \right) = \rd \left( \ln \left( 8 \pi \mu_{\lambda, x_\star} \right) - 2 \varphi - c_{0, 8 \pi \mu_{\lambda, x_\star}} \right) = - 2 \rd \varphi \not \equiv 0.
	\end{equation}
\end{remark}

\bigskip

\subsection{The Keller--Segel free energy}
\label{sec:KS_fe}

The (flat) \emph{Keller--Segel free energy} of $\varrho \in \cC_\KS (m, 0)$ is
\begin{equation}
	\cF_0 \left( \varrho \right) = \int\limits_{\rl^2} \varrho \ln \left( \varrho \right) \dA_0 - \frac{1}{2} \iint\limits_{\rl^2 \times \rl^2} \varrho (x) G (x, y) \varrho (y) \dA_0 (x) \dA_0 (y). \label{eq:KS_free_energy}
\end{equation}

\smallskip

\begin{remark}
	Formally, \cref{eq:KS1} is the negative gradient flow of the Keller--Segel free energy under the \emph{Wasserstein metric}. Formally this metric can be introduced as follows: If $\varrho \in \cC_\KS (m, \varphi)$, then the operator $f \mapsto L_\varrho (f) \eqdef \rd^* \left( \varrho \rd f \right)$ is expected to be nondegenerate. Then if $\dot{\varrho}$ is a tangent vector to $\cC_\KS (m, \varphi)$, then its Wasserstein norm is given by
	\begin{equation}
		\| \dot{\varrho} \|_W^2 \eqdef \int\limits_{\rl^2} \dot{\varrho} L_\varrho^{- 1} \left( \dot{\varrho} \right) \dA_0.
	\end{equation}
	Then the Wasserstein norm is a Hilbert norm, thus can be used to define gradient flows.
\end{remark}

\begin{remark}
	The functional in \eqref{eq:KS_free_energy} is also the energy of self-gravitating Brownian dust; cf. \cite{CRRS04}.
\end{remark}

\smallskip

Let us generalize $\cF_0$ to $\left( \rl^2, g_\varphi \right)$: For any $\varrho \in \cC_\KS (m, \varphi)$, let the \emph{curved Keller--Segel free energy} be
\begin{equation}
	\cF_\varphi \left( \varrho \right) \eqdef \int\limits_{\rl^2} \varrho \ln \left( \varrho \right) \dA_\varphi - \frac{1}{2} \iint\limits_{\rl^2 \times \rl^2} \varrho (x) G (x, y) \varrho (y) \dA_\varphi (x) \dA_\varphi (y). \label{eq:curved_KS_free_energy}
\end{equation}

Now we are ready to prove our last main result.

\begin{theorem}
	\label{theorem:KS_bound}
	The curved Keller--Segel free energy \eqref{eq:curved_KS_free_energy} is bounded from below on $\cC_\KS (m, \varphi)$, exactly when $m = 8 \pi$.
\end{theorem}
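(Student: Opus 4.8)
The plan is to reduce the claim to the flat case $\varphi \equiv 0$ and then read off both directions from the logarithmic Hardy--Littlewood--Sobolev inequality and the explicit family $\mu_{\lambda, x_\star}$. The reduction rests on the elementary identity obtained by substituting $\widetilde{\varrho} \eqdef \varrho e^{2 \varphi}$ and $\dA_\varphi = e^{2 \varphi} \dA_0$ into \cref{eq:curved_KS_free_energy}:
\begin{equation}
	\cF_\varphi (\varrho) = \cF_0 \left( \varrho e^{2 \varphi} \right) - 2 \int\limits_{\rl^2} \varrho \varphi \dA_\varphi .
\end{equation}
Since $\varphi$ is smooth and compactly supported, the map $\varrho \mapsto \varrho e^{2 \varphi}$ is a bijection $\cC_\KS (m, \varphi) \to \cC_\KS (m, 0)$ --- each of the defining conditions in \cref{eq:KS_config_space} is preserved, using in particular $c_{0, \varrho e^{2 \varphi}} = c_{\varphi, \varrho}$ --- and the correction term is uniformly bounded: $\left| 2 \int_{\rl^2} \varrho \varphi \dA_\varphi \right| \leqslant 2 m \| \varphi \|_{L^\infty \left( \rl^2 \right)}$. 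Hence $\cF_\varphi$ is bounded below on $\cC_\KS (m, \varphi)$ if and only if $\cF_0$ is bounded below on $\cC_\KS (m, 0)$, so I may assume $\varphi \equiv 0$.

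For the ``if'' direction ($m = 8 \pi$) I would apply \Cref{theorem:classical_log-HLS} to $\varrho$ with an arbitrary fixed pair $(\lambda, x_\star)$, expand the quadratic Coulomb form on the right, and move the self-energy $\tfrac{1}{2} \iint \varrho G \varrho$ to the left so that the left-hand side becomes exactly $\cF_0 (\varrho)$. On the right the two remaining $\varrho$-dependent contributions --- the entropy-type term $\int_{\rl^2} \varrho \ln \left( 8 \pi \mu_{\lambda, x_\star} \right) \dA_0$ and the linear term $- 8 \pi \iint \varrho \, G \, \mu_{\lambda, x_\star}$ --- cancel thanks to the explicit evaluation \cref{eq:G_mu}, leaving only a constant; by \cref{eq:mu_Coulomb_energy} the $\ln (\lambda)$ parts of that constant cancel precisely because $m = 8 \pi$, yielding the sharp bound $\cF_0 (\varrho) \geqslant 8 \pi \left( \ln (8) - 1 \right)$, with equality exactly at $\varrho = 8 \pi \mu_{\lambda, x_\star}$.

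For the ``only if'' direction I would simply test $\cF_0$ on the family $\varrho = m \mu_{\lambda, x_\star} \in \cC_\KS (m, 0)$. Using \cref{eq:mu_entropy,eq:mu_Coulomb_energy} this evaluates to $m \ln \left( \tfrac{m}{\pi e} \right) + \tfrac{m^2}{8 \pi} + \tfrac{m}{4 \pi} \left( m - 8 \pi \right) \ln (\lambda)$, whose only $\lambda$-dependence sits in the last term; since $m \neq 8 \pi$ its coefficient is nonzero, so sending $\lambda \to \infty$ (if $m < 8 \pi$) or $\lambda \to 0$ (if $m > 8 \pi$) drives $\cF_0$ to $- \infty$. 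This is the variational counterpart of the virial obstruction behind \Cref{theorem:critical_mass}.

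I do not expect a genuinely hard step; the substance is bookkeeping. The two points that need care are: in the reduction, checking that $\varrho \mapsto \varrho e^{2 \varphi}$ really preserves the entropy-integrability clause $\varrho \ln (\varrho) \in L^1$ and the ``$c_{\varphi, \varrho}$ defined everywhere'' clause of $\cC_\KS$ (both routine because $\varphi$ is bounded with compact support); and in the $m = 8 \pi$ argument, the algebra confirming that all $\lambda$-dependent and numerical constants cancel to leave a finite --- indeed sharp --- number rather than something that still degenerates as $\lambda$ varies. An alternative that bypasses the reduction entirely is to run the $m = 8\pi$ argument directly in the curved setting using \Cref{lem:curved_log-HLS} with $\mu_{\lambda, x_\star}^\varphi$ in place of $\mu_{\lambda, x_\star}$; this is presumably why that lemma was established.
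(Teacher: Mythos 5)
Your proof is correct, and its ingredients are the same as the paper's: the explicit family $\mu_{\lambda, x_\star}$ with the identities \eqref{eq:mu_entropy}--\eqref{eq:mu_Coulomb_energy} for the unboundedness when $m \neq 8\pi$, and the logarithmic Hardy--Littlewood--Sobolev inequality for the lower bound when $m = 8\pi$. The only real difference is organizational: you front-load the conformal substitution as an exact identity $\cF_\varphi (\varrho) = \cF_0 \left( \varrho e^{2 \varphi} \right) - 2 \int_{\rl^2} \varrho \varphi \dA_\varphi$ together with the bijection $\varrho \mapsto \varrho e^{2\varphi}$ of configuration spaces (using $c_{0, \varrho e^{2\varphi}} = c_{\varphi, \varrho}$), and then argue entirely in the flat setting with \Cref{theorem:classical_log-HLS}; the paper instead keeps working with $\cF_\varphi$ and invokes the curved \Cref{lem:curved_log-HLS}, whose proof is exactly the substitution you perform, followed by a longer expansion in the curved variables. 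Your test family $m \mu_{\lambda, x_\star}$ corresponds under your bijection to the paper's test family $m \mu_{\lambda, x_\star} e^{-2\varphi}$, so that direction is the same computation. What your reduction buys is lighter bookkeeping and independence from \Cref{lem:curved_log-HLS}; what it gives up is the sharper conclusion the paper extracts in the $m = 8\pi$ case, namely the exact infimum $8 \pi \ln \left( \tfrac{8}{e} \right) - 16 \pi \sup \varphi$, for which one should keep the correction term $- 2 \int_{\rl^2} \varrho \varphi \dA_\varphi$ and optimize over concentrating $\varrho$ near a maximum of $\varphi$ rather than bounding it crudely by $2 m \| \varphi \|_{L^\infty}$; for the boundedness statement of the theorem your coarser bound is entirely sufficient.
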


\begin{proof}
	Let $m, \lambda \in \rl_+$, and $\mu_{\lambda, 0}$ as in \cref{eq:mu_def} (with $x_\star = 0$). Now \cref{eq:mu_Coulomb_energy,eq:mu_entropy} imply that
	\begin{align}
		\cF_\varphi \left( m \mu_{\lambda, x_\star} e^{- 2 \varphi} \right)	&= \int\limits_{\rl^2} m \mu_{\lambda, x_\star} e^{- 2 \varphi} \ln \left( m \mu_{\lambda, x_\star} e^{- 2 \varphi} \right) \dA_\varphi \\
		& \quad - \frac{1}{2} \iint\limits_{\rl^2 \times \rl^2} m \mu_{\lambda, x_\star} (x) e^{- 2 \varphi (x)} G (x, y) m \mu_{\lambda, x_\star} (y) e^{- 2 \varphi (y)} \dA_\varphi (x) \dA_\varphi (y) \\
		&= \int\limits_{\rl^2} m \mu_{\lambda, x_\star} \ln \left( m \mu_{\lambda, x_\star} \right) \dA_0 - 2 m \int\limits_{\rl^2} \mu_{\lambda, x_\star} \varphi \dA_0 \\
		& \quad - \frac{m^2}{2} \iint\limits_{\rl^2 \times \rl^2} \mu_{\lambda, x_\star} (x) G (x, y) \mu_{\lambda, x_\star} (y) \dA_0 (x) \dA_0 (y) \\
		&= \frac{m}{4 \pi} \left( m - 8 \pi \right) \ln (\lambda) + m \ln \left( \frac{m}{\pi e} \right) - 2 m \int\limits_{\rl^2} \mu_{\lambda, x_\star} \varphi \dA_0.
	\end{align}
	As $\lambda \rightarrow 0^+$, the last term goes to $\varphi (x_\star)$. Thus, when $m > 8 \pi$, then
	\begin{equation}
		\lim\limits_{\lambda \rightarrow 0^+} \cF_\varphi \left( m \mu_{\lambda, x_\star} e^{- 2 \varphi} \right) = - \infty.
	\end{equation}
	Similarly, as $\lambda \rightarrow \infty$, the last term goes to zero. Thus, when $m < 8 \pi$, then
	\begin{equation}
		\lim\limits_{\lambda \rightarrow \infty} \cF_\varphi \left( m \mu_{\lambda, x_\star} e^{- 2 \varphi} \right) = - \infty.
	\end{equation}
	This proves the claim for $m \neq 8 \pi$.

	When $m = 8 \pi$, then for any $\varrho \in \cC_\KS (m, \varphi)$, we have
	\begin{align}
		\cF_\varphi \left( \varrho \right) &= \int\limits_{\rl^2} \varrho \ln \left( \varrho \right) \dA_\varphi - \frac{1}{2} \iint\limits_{\rl^2 \times \rl^2} \varrho (x) G (x, y) \varrho (y) \dA_\varphi (x) \dA_\varphi (y) \\
		&= \int\limits_{\rl^2} \varrho \ln \left( \varrho \right) \dA_\varphi - \frac{4 \pi}{m} \iint\limits_{\rl^2 \times \rl^2} \varrho (x) G (x, y) \varrho (y) \dA_\varphi (x) \dA_\varphi (y) \\
		&= \int\limits_{\rl^2} \varrho \ln \left( \frac{\varrho}{m \mu_{\lambda, x_\star}^\varphi} \right) \dA_\varphi - \frac{4 \pi}{m} \iint\limits_{\rl^2 \times \rl^2} \left( \varrho (x) - m \mu_{\lambda, x_\star}^\varphi (x) \right) G (x, y) \left( \varrho (y) - m \mu_{\lambda, x_\star}^\varphi (y) \right) \dA_\varphi (x) \dA_\varphi (y) \\
		& \quad + m \ln (m) - 2 \int\limits_{\rl^2} \varrho \varphi \dA_\varphi + \int\limits_{\rl^2} \varrho \ln \left( \mu_{\lambda, x_\star} \right) \dA_\varphi - 8 \pi \iint\limits_{\rl^2 \times \rl^2} \varrho (x) G (x, y) \mu_{\lambda, x_\star} (y) \dA_\varphi (x) \dA_0 (y) \\
		& \quad + 4 \pi m^2 \iint\limits_{\rl^2 \times \rl^2} \mu_{\lambda, x_\star} (x) G (x, y) \mu_{\lambda, x_\star} (y) \dA_0 (x) \dA_0 (y) \\
		&= \int\limits_{\rl^2} \varrho \ln \left( \frac\varrho{m \mu_{\lambda, x_\star}^\varphi} \right) \dA_\varphi - \frac{4 \pi}{m} \iint\limits_{\rl^2 \times \rl^2} \left( \varrho (x) - m \mu_{\lambda, x_\star}^\varphi (x) \right) G (x, y) \left( \varrho (y) - m \mu_{\lambda, x_\star}^\varphi (y) \right) \dA_\varphi (x) \dA_\varphi (y) \\
		& \quad + m \ln (m) - 2 \int\limits_{\rl^2} \varrho \varphi \dA_\varphi + \int\limits_{\rl^2} \varrho (x) \left( \ln \left( \mu_{\lambda, x_\star} (x) \right) - 8 \pi \int\limits_{\rl^2} G (x, y) \mu_{\lambda, x_\star} (y) \dA_0 (y) \right) \dA_\varphi (x) \\
		& \quad + 4 \pi m \iint\limits_{\rl^2 \times \rl^2} \mu_{\lambda, x_\star} (x) G (x, y) \mu_{\lambda, x_\star} (y) \dA_0 (x) \dA_0 (y).
	\end{align}
	Now, using \cref{ineq:curved_log-HLS,eq:G_mu,eq:mu_Coulomb_energy,eq:dirac}, and plugging back $m = 8 \pi$, we get
	\begin{equation}
		\inf \: \left( \left\{ \ \cF_\varphi \left( \varrho \right) \ \middle| \ \varrho \in \cC_\KS (m, \varphi) \ \right\} \right) = 8 \pi \ln \left( \tfrac{8}{e} \right) - 16 \pi \sup \left( \left\{ \varphi (x) \middle| x \in \rl^2 \right\} \right),
	\end{equation}
	which completes the proof.
\end{proof}

\begin{remark}
	It is not entirely obvious if the relevant generalization of Keller--Segel free energy \eqref{eq:KS_free_energy} is the functional, $\cF_\varphi$, in \cref{eq:curved_KS_free_energy}. There is an generalization that is minimally coupled to the metric: Let $\kappa_\varphi \eqdef \Delta_\varphi \varphi$ be the Gauss curvature of $g_\varphi$ and $q \in \rl$ be a coupling constant. Then let us define
	\begin{align}
		\cF_{\varphi, q} \left( \varrho \right)	&\eqdef \int\limits_{\rl^2} \varrho \ln \left( \varrho \right) \dA_\varphi - \frac{1}{2} \iint\limits_{\rl^2 \times \rl^2} \varrho (x) G (x, y) \varrho (y) \dA_\varphi (x) \dA_\varphi (y) \\
												& \quad + q \iint\limits_{\rl^2 \times \rl^2} \kappa_\varphi (x) G (x, y) \varrho (y) \dA_\varphi (x) \dA_\varphi (y). \label{eq:coupled_KS_free_energy}
	\end{align}
	When $m \neq 8 \pi$, the proof of \Cref{theorem:KS_bound} can still be used the prove the unboundedness of $\cF_{\varphi, q}$, and when $m = 8 \pi$, we get
	\begin{align}
		\cF_{\varphi, q} \left( \varrho \right)	&\geqslant \int\limits_{\rl^2} \varrho \ln \left( \frac\varrho{m \mu_{\lambda, x_\star}^\varphi} \right) \dA_\varphi - \frac{4 \pi}{m} \iint\limits_{\rl^2 \times \rl^2} \left( \varrho (x) - m \mu_{\lambda, x_\star}^\varphi (x) \right) G (x, y) \left( \varrho (y) - m \mu_{\lambda, x_\star}^\varphi (y) \right) \dA_\varphi (x) \dA_\varphi (y) \\
		& \quad + (q - 2) \int\limits_{\rl^2} \varrho \varphi \dA_\varphi \dA_\varphi + 8 \pi \ln \left( \tfrac{8}{e} \right).
	\end{align}
	In particular, when $q = 2$, then $\varrho = 8 \pi \mu_{\lambda, x_\star}^\varphi$ is an absolute minimizer of $\cF_{\varphi, q}$.
\end{remark}

	\bibliography{references}

\end{document}